\def\var{{\mathrm{var}}}
\newcommand\norm[1]{\lVert#1\rVert}
\patchcmd{\@maketitle}{\LARGE \@title}{\fontsize{16}{19.2}\selectfont\@title}{}{}
\newtheorem{theorem}{Theorem}
\newtheorem{lemma}{Lemma}
\begin{document}

\title{Improved convergence rates of nonparametric penalized regression under misspecified total variation}

\author[1]{Marlena Bannick}
\author[1]{Noah Simon}

\affil[1]{Department of Biostatistics, University of Washington, Seattle, Washington, U.S.A.}


\maketitle

\begin{abstract}
    Penalties that induce smoothness are common in nonparametric regression. In many settings, the amount of smoothness in the data generating function will not be known. \citet{simonConvergenceRatesNonparametric2021} derived convergence rates for nonparametric estimators under misspecified smoothness. We show that their theoretical convergence rates can be improved by working with convenient approximating functions. Properties of convolutions and higher-order kernels allow these approximation functions to match the true functions more closely than those used in \citet{simonConvergenceRatesNonparametric2021}. As a result, we obtain tighter convergence rates.
\end{abstract}

\newpage
\section{Introduction}

Suppose we are interested in estimating a regression function $f^*$, with 
\begin{align}\label{eq:f-star}
	y_i = f^*(x_i) + \epsilon_i
\end{align}
where $\epsilon_i$ are independent, with $E(\epsilon_i|x_i) = 0$, $\var(\epsilon|x_i) = \sigma^2$, and $x_i \in [0, 1]$, with $x_i$ equally spaced. In many cases, little is known about $f^*$ beyond perhaps general smoothness and structural constraints. In that case, nonparametric techniques are used. One prominent nonparametric technique is penalized regression, which is given as the solution to a functional optimization problem. In this case a penalty function, $P$, is selected based on known structure and smoothness in $f^*$, and one obtains an estimator by solving:
\begin{align}\label{eq:f-hat}
	\hat{f} = \arg\min_{f\in\mathcal{F}} \frac{1}{2}||y - f||_n^2 + \lambda_n P(f)
\end{align}
where $||y - f||_n^2 = n^{-1} \sum_{i=1}^{n} (y_i - f(x_i))^2$ denotes the empirical norm, and $\lambda_n>0$ is a tuning parameter that balances MSE and roughness, selected as a function of the sample size, $n$. 

A commonly used penalty function $P$ is the $k$th-order total variation of $f$:
\begin{align}\label{total-variation}
P_k(f) = \sup_{x_1, ..., x_M} \sum_{m=1}^{M} \left|f^{(k-1)}(x_{m+1}) - f^{(k-1)}(x_{m})\right|
\end{align}
where $f^{(k)}$ is the $k$th derivative of $f$ with associated function class $\mathcal{F}_k$. $\mathcal{F}_k$ is the class of functions with finite $k$th-order total variation $\mathcal{F}_{k} = \{f: [-1, 1] \to \mathbb{R}; f \text{ is $k-1$ times weakly differentiable, } P_k(f) < \infty\}$ and with $\mathcal{F}_k$ a linear subspace of $\mathcal{L}_2[-1,1]$.

There are many contemporary extensions/modifications that make use of this type of penalty. These include the Lasso Isotone \citep{fangLASSOIsotoneHighDimensional2012}, the fused Lasso for additive models \citep{tibshiraniSparsitySmoothnessFused2005, petersenFusedLassoAdditive2016}, additive regression splines \citep{jhongAdditiveRegressionSplines2022}, additive models for quantile regression \citep{koenkerAdditiveModelsQuantile2011}, and generalized sparse additive models \citep{harisGeneralizedSparseAdditive}. Recent work has focused on efficient computation of these estimates \citep{sadhanalaAdditiveModelsTrend2019, sadhanalaMultivariateTrendFiltering2021, tibshiraniDividedDifferencesFalling2022, wangFallingFactorialBasis2014a, tibshiraniAdaptivePiecewisePolynomial2014}.

It is of general interest to understand the rate of convergence of $\hat{f}$, obtained by solving \eqref{eq:f-hat} with penalty $P_k$, to $f^{*}$ (and eg. particularly critical when using $\hat{f}$ as a nuisance estimate when engaging with a pathwise differentiable parameter). If $f^*$ has the requisite smoothness (i.e., $f^{*}\in \mathcal{F}_k$), then standard results show that $\hat{f}$ achieves the minimax rate over an appropriately bounded subset of $\mathcal{F}_k$. However, if the true data-generating function is not smooth enough ($f^{*}\not\in \mathcal{F}_k$), these convergence rates do not hold. \cite{simonConvergenceRatesNonparametric2021} derive convergence rates for $\hat{f}$ in \eqref{eq:f-hat} with misspecified smoothness (when $f^{*}\in \mathcal{F}_l$ for some $\ell < k$). Their results indicate that convergence rates depend on both the assumed ($k$) and true ($l$) levels of smoothness. Based on their empirical results, \citet{simonConvergenceRatesNonparametric2021} hypothesize that their convergence rates are not sharp in many settings. In this manuscript we give sharper convergence rates that match the empirical results in \cite{simonConvergenceRatesNonparametric2021}.

We preview our results in Table \ref{tab-1}. We emphasize that our results apply only when $f^*$ does not have the requisite smoothness ($\ell < k$). When $\ell = 1$ (e.g., $f^*$ is a piecewise constant function), our rates match those of \citet{simonConvergenceRatesNonparametric2021}. When $\ell > 1$, we improve on the rates in \citet{simonConvergenceRatesNonparametric2021}. Based on empirical results, our rates in both scenarios appear to be sharp. This closes a gap in the literature for convergence rates of under misspecified smoothness.

\begin{table}[htbp]
\centering
\caption{A comparison of convergence rates derived in \citet{simonConvergenceRatesNonparametric2021}, and our rates, when a $k$th order total variation penalty is used to estimate a function $f^* \in \mathcal{F}_{\ell}$. $^{1}$See van de Geer (2000) Theorem 10.2 and Section 10.1.2}\label{tab-1}
\begin{tabular}{c|c|cc}
\hline
& \multicolumn{1}{c}{Correct Specification} & \multicolumn{2}{|c}{Misspecification} \\
\hline
  & $k \leq \ell$  & $k > \ell$, $\ell = 1$ & $k > \ell$, $\ell > 1$ \\
  \hline\hline
  van de Geer (2000)$^{1}$ & $n^{-2k/(2k+1)}$ & -- & -- \\
  \citet{simonConvergenceRatesNonparametric2021} & -- & $n^{-2k/(4k-1)}$ & $n^{-2k/(3k-\ell + 1)}$ \\
  Theorem \ref{main-thm-main} & -- & $n^{-2k/(4k-1)}$ & $n^{-2k(2\ell-1)/(4k\ell-1)}$ \\
  \hline
\end{tabular}
\end{table}

\section{Preliminaries}

Here we summarize the strategy that \citet{simonConvergenceRatesNonparametric2021} use to derive convergence rates (Theorem \ref{theorem-1}). The key element of their strategy is to find a sequence of functions $f_1, ..., f_n$ that converge to $f^*$. But unlike $f^*$, these functions each live in $\mathcal{F}_k$. We will use the same strategy to derive our convergence rates, but with a different sequence of approximation functions.

In Theorem \ref{theorem-1}, we restate \citet{simonConvergenceRatesNonparametric2021}'s result that the MSE of $\hat{f}$ can be upper bounded by two terms related to the approximation sequence. The first term is the approximation error. This is the MSE between the true function $f^*$ and the sequence $f_n$. As $n \to \infty$, the approximation error should vanish. The second term is related to the smoothness of the approximating sequence. Since  $f^*$ lives in the closure of $\mathcal{F}_k$ and not $\mathcal{F}_k$ itself, the $k$th order total variation of $f_n$ should increase without bound as $f_n$ approaches $f^*$. To upper bound the MSE, the approximation error and smoothness terms must be carefully balanced. This can be accomplished by choosing $f_1, ..., f_n$ with care.
\begin{theorem}[Theorem 2.2 of \citet{simonConvergenceRatesNonparametric2021}]\label{theorem-1}
	Suppose data are generated according to \eqref{eq:f-star}, $\hat{f}$ is defined as in \eqref{eq:f-hat} for some $\lambda_n > 0$, $P_k$ is defined as in \eqref{total-variation}, and $$\mathcal{F}_{k} = \{f: [-1, 1] \to \mathbb{R}; f \text{ is $k$ times weakly differentiable, } P_k(f) < \infty\}$$ is a linear subspace of $\mathcal{L}_2[-1,1]$. Let $f_0, ..., f_n \in \mathcal{F}_k$ with $P_k(f_j) > 0$ for all $j$.
	If we choose
 \begin{align*}
     \lambda_n = O_p\left\{n^{-2/(2+k^{-1})}P_k^{(k^{-1}-2)/(k^{-1}+2)}(f_n)\right\},
 \end{align*}
 then
\begin{align}\label{inequality}
||\hat{f} - f^*||_n^2 \leq ||f^* - f_n||_n^2 + O_p\left\{\lambda_n P_k(f_n)\right\}.
\end{align}
\end{theorem}

For $f^* \in \mathcal{F}_{\ell}$, \citet{simonConvergenceRatesNonparametric2021} chose integrals of b-splines as their approximating sequences. These smooth over a small domain around each discontinuity in the $\ell$th derivative of $f^*$. When $\ell > 1$, these approximation functions are not ideal: Consider $f^{*}$ that is infinitely smooth outside of a finite set of discontinuities in higher order derivatives. It is ideal to identify approximations $f_1, ..., f_n$ that exactly agree with $f^{*}$ except in vanishing neighborhoods around those discontinuities. That is not the case for those $f_n$ proposed in \citet{simonConvergenceRatesNonparametric2021}.
This leads to excessive approximation error. For example, when a 3rd-order penalty is used to estimate $f^{*}$ that is a piecewise 2nd-order polynomial, their upper bound obtained via Theorem \ref{theorem-1} does not match their empirical results (it is too loose).

\section{Improved convergence rates using higher-order kernels}

We specify a different sequence of approximation functions that allows us to get a tighter bound on the approximation error. We denote our approximation functions as $f_{\delta,k}$, parameterized by $\delta = \delta(n)$. To construct $f_{\delta,k}$, we convolve $f^*$ with a $k$th-order kernel function $H_{k,\delta}$ that has bounded support on $[-\delta,\delta]$, integrates to 1, and has bounded derivatives \citep{tsybakov2009}. This smoothes over discontinuities in the $\ell$th derivative of $f^*$ using small windows of size $[-\delta, \delta]$. Importantly, convolving a higher order kernel with a polynomial of a lower order leaves the polynomial unchanged (see Supplemental Material Lemma C.2). This is why $f_{\delta,k}$ matches $f^*$ outside of the small domain around each discontinuity. More specifically, we let $f_{\delta,k}$ be defined as follows:
\begin{align*}
	f_{\delta,k} = f^* \star H_{k,\delta} = x &\to \int_{-\infty}^{\infty} f^*(x-t) H_{k, \delta}(t) dt = \int_{-\delta}^{\delta} f^*(x-t) H_{k, \delta}(t) dt.
 \end{align*}
The use of convolutions to smooth out rough behavior is an established proof technique in differential equations. Convolution with higher order kernels is also prominently used, methodologically, in non-parametric regression (e.g., in kernel density estimation, and nadaraya-watson estimators). However, to the best of our knowledge, using convolution with higher order kernels to identify smooth oracle sequences is novel. Around the discontinuities, the convolution returns a kernel-weighted average of $f^*$ (some of these weights will necessarily be negative if $k \geq 2$). We show an example of $f^*$, $H_{k,\delta}$, and $f_{\delta,k}$ in Figure \ref{fig:approx-funcs}.
\begin{figure}[htbp]
    \centering
    \caption{Plot of the original function $f^* \in \mathcal{F}_{\ell}$ (black line) convoluted with a higher-order kernel $H_{k,\delta}$, resulting in $f_{\delta,k}$ (blue line). On the left, $\ell = 1, k = 2$, and on the right $\ell = 2, k = 3$. The shaded region indicates the bandwidth $\delta$, in which the approximation function $f_{\delta,k}$ may differ from the true function $f^*$.}
\includegraphics[width=\textwidth]{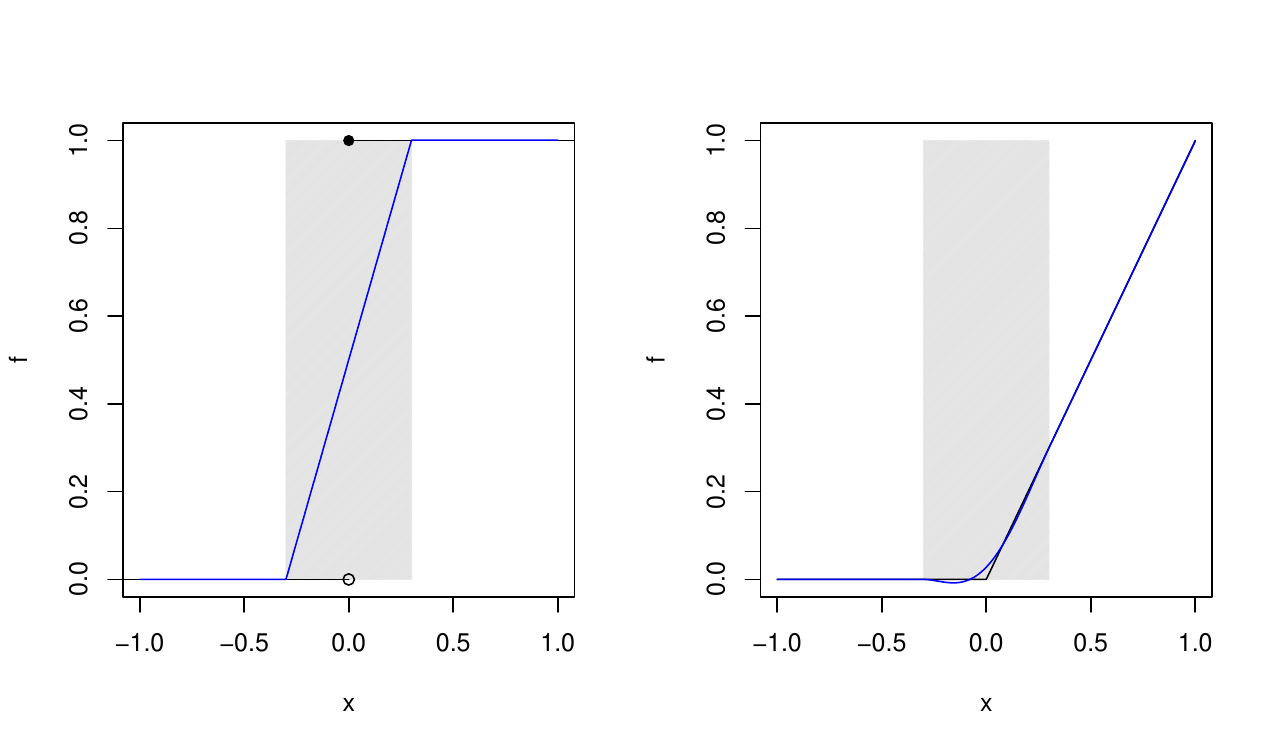}
    \label{fig:approx-funcs}
\end{figure}

Using $f_{k,\delta}$, we derive upper bounds on both the approximation error (Lemma \ref{approx-error}) and the value of penalty function (Lemma \ref{penalty-main}). The upper bounds are then used to obtain an upper bound on the MSE using \eqref{inequality}.

\begin{lemma}[Upper Bound on Approximation Error]\label{approx-error}
	Let $f^* \in \mathcal{F}_{\ell}$, and $k > \ell$. Then $||f^* - f_{\delta,k}||_n^2 = O_p(\delta^{2\ell-1})$.
\end{lemma}

\begin{lemma}[Upper Bound on Penalty Function]\label{penalty-main}
	Let $f^* \in \mathcal{F}_{\ell}$, and $k > \ell$. Then $P_k(f_{\delta,k}) = O(1/\delta^{k-\ell})$. Specifically,
	\begin{align*}
		P_k(f_{\delta,k}) \leq C P_{\ell}(f^*) / \delta^{k-\ell}
	\end{align*}
	where $C$ is a constant that depends on $k$ and $\ell$, and $ P_{\ell}(f^*)$ is the $\ell$th order total variation penalty of $f^*$.
\end{lemma}

We can now combine Lemmas \ref{approx-error} and \ref{penalty-main}, with Theorem \ref{theorem-1} to obtain rates on $\norm{\hat{f} - f^*}_n^2$. The main work of Theorem \ref{main-thm-main} is to optimize over $\delta$ as a function of $n$ to balance the approximation error and smoothness terms.
\begin{theorem}[Improved Convergence Rates]\label{main-thm-main}
	Let $f^* \in \mathcal{F}_{\ell}$. If we choose $$\lambda_n = n^{-2k(k + \ell-1)/(4k\ell - 1)} \{CP_{\ell}(f^*)\}^{(k^{-1} - 2)/(k^{-1} + 2)}$$ with $C$ from Lemma \ref{penalty-main}
 and a $k$th order total variation penalty, $P_k(\cdot)$ is used to estimate $\hat{f}$ in \eqref{eq:f-hat}, then we have
	\begin{align*}
		||f^* - \hat{f}||_n^2 = O_p\left\{n^{\frac{-2k (2\ell - 1)}{4k\ell - 1}} \right\}.
	\end{align*}
\end{theorem}
This result is obtained by selecting $\delta(n) = O\left\{n^{-2k/(4k\ell - 1)}\right\}$ for our oracle approximating sequence. That choice balances the two terms in \eqref{inequality}.

This is a faster rate than obtained in \citet{simonConvergenceRatesNonparametric2021} (see Table \ref{tab-1}); e.g., for $k = 3$ and $\ell = 2$, our rate is 0.783, whereas theirs is 0.75. Our rate matches the empirical rates observed in that paper (which we replicate in the appendix), leading us to hypothesize that our rates are sharp. However, in order to prove such a statement, we would need to derive a lower bound on the MSE under misspecified smoothness, which is beyond the scope of this article.

\bibliographystyle{apalike}
\bibliography{bib}

\end{document}


\maketitle

\tableofcontents

The goal of this appendix is that it is relatively self-contained. As such, we have re-proved results from others in detail, namely \citet{simonConvergenceRatesNonparametric2021} in Section \ref{app:simon} and \citet{birmanPIECEWISEPOLYNOMIALAPPROXIMATIONSFUNCTIONS1967} in Section \ref{app:birman}. The proofs behind our new work are included in Section \ref{app:new}. In Section \ref{app:sim}, we restate the simulation results from \citet{simonConvergenceRatesNonparametric2021} for completeness, and to show how they match up with our theoretical rates.

\appendix
\newpage
\section{Proof of Theorem 3.1 in \cite{simonConvergenceRatesNonparametric2021}}\label{app:simon}

In this appendix, we re-prove Theorem 3.1 in \cite{simonConvergenceRatesNonparametric2021}. The results in this appendix are not new, but we provide comprehensive proofs for the convenience of the reader. We start by proving Lemma \ref{lemma:s1.1} and Lemma \ref{lemma:s1.2}, which are Lemmas S1.1 and S1.2 in \citet{simonConvergenceRatesNonparametric2021}, respectively. These two results are combined to prove Lemma \ref{lemma:2.1}, which is Lemma 2.1 in \citet{simonConvergenceRatesNonparametric2021}. The purpose of Lemma \ref{lemma:2.1} is to express an upper bound for the MSE $\norm{\hat{f} - f^*}^2_n$ (where $f^* \notin \mathcal{F}_k$) in terms of some other function $f^O$ and the estimated function $\hat{f}$ (both of which live in $\mathcal{F}_k$).

\begin{lemma}\label{lemma:s1.1}
	Suppose that $\hat{f}$, $f^*$, and $f^O$ are functions that map to $\mathbb{R}$. Then
	\begin{align*}
		2\langle \hat{f} - f^*, \hat{f} - f^O \rangle_n = \norm{\hat{f} - f^*}_n^2 + \norm{\hat{f} - f^O}_n^2 - \norm{f^* - f^O}_n^2.
	\end{align*}
\end{lemma}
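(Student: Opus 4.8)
This is the polarization identity for the (semi-)inner product $\langle\cdot,\cdot\rangle_n$, so the plan is simply to expand both sides and match terms. I would start from the right-hand side, write each squared norm as an inner product of a difference with itself, and collect terms.

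**Key steps.** First I would recall that $\langle\cdot,\cdot\rangle_n$ is bilinear and symmetric (it is the empirical inner product $\langle g,h\rangle_n = \tfrac{1}{n}\sum_i g(x_i)h(x_i)$), so that $\norm{g}_n^2 = \langle g,g\rangle_n$ and the usual expansion $\norm{g-h}_n^2 = \norm{g}_n^2 - 2\langle g,h\rangle_n + \norm{h}_n^2$ holds. Next I would expand the three terms on the right:
\begin{align*}
	\norm{\hat{f} - f^*}_n^2 &= \norm{\hat f}_n^2 - 2\langle \hat f, f^*\rangle_n + \norm{f^*}_n^2,\\
	\norm{\hat{f} - f^O}_n^2 &= \norm{\hat f}_n^2 - 2\langle \hat f, f^O\rangle_n + \norm{f^O}_n^2,\\
	\norm{f^* - f^O}_n^2 &= \norm{f^*}_n^2 - 2\langle f^*, f^O\rangle_n + \norm{f^O}_n^2.
\end{align*}
Subtracting the third from the sum of the first two cancels the $\norm{f^*}_n^2$ and $\norm{f^O}_n^2$ terms and leaves $2\norm{\hat f}_n^2 - 2\langle \hat f, f^*\rangle_n - 2\langle \hat f, f^O\rangle_n + 2\langle f^*, f^O\rangle_n$. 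Then I would factor this as $2\langle \hat f - f^*,\ \hat f - f^O\rangle_n$ by expanding that inner product via bilinearity and checking the four resulting terms agree.

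**Main obstacle.** There is essentially no obstacle: the only thing to be careful about is that $\langle\cdot,\cdot\rangle_n$ need only be a positive semi-definite bilinear form (not necessarily a genuine inner product, since evaluation at finitely many points can be degenerate), but the identity uses only bilinearity and symmetry, so it holds regardless. The whole proof is two or three lines of algebra.
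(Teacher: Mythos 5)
Your proof is correct and rests on the same fact — bilinearity and symmetry of $\langle\cdot,\cdot\rangle_n$ — as the paper's. The paper takes a marginally shorter route by inserting $\hat f$ into the single term $\norm{f^*-f^O}_n^2 = \norm{(f^*-\hat f)+(\hat f-f^O)}_n^2$ and expanding once, whereas you expand all three squared norms and recollect, but this is a cosmetic difference, not a different argument.
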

\begin{proof}
\begin{align*}
	\norm{f^* - f^O}_n^2 &= \norm{(f^* - \hat{f}) + (\hat{f} - f^O)}_n^2 \\
	&= \norm{f^* - \hat{f}}_n^2 + \norm{\hat{f} - f^O}_n^2 + 2\langle f^* - \hat{f}, \hat{f} - f^O \rangle_n \\
	&= \norm{f^* - \hat{f}}_n^2 + \norm{\hat{f} - f^O}_n^2 - 2 \langle\hat{f} - f^*, \hat{f} - f^O \rangle_n.
\end{align*}
\end{proof}

\begin{lemma}\label{lemma:s1.2}
	Let $\hat{f}$ be defined as in (3) and let $f^O \in \mathcal{F}$ be any other function, where $\mathcal{F}$ is a linear subspace of $\mathcal{L}_2[-1, 1]$, and $P$ is a seminorm. Then
	\begin{align*}
		\langle y - \hat{f}, f^O - \hat{f} \rangle _n \leq \lambda P(f^O) - \lambda P(\hat{f}).
	\end{align*}
\end{lemma}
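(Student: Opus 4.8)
The plan is to read off the claimed inequality as the first-order optimality condition for the penalized least-squares estimator $\hat f$ defined in (3), using a standard first-variation argument. Writing the penalized criterion as $g(t) := \tfrac12\norm{y - f_t}_n^2 + \lambda P(f_t)$, the key observation is that since $\mathcal{F}$ is a \emph{linear} subspace, for any $t \in [0,1]$ the function $f_t := (1-t)\hat f + t f^O = \hat f + t(f^O - \hat f)$ again lies in $\mathcal{F}$. Hence $f_t$ is an admissible competitor, and optimality of $\hat f$ gives $g(t) \ge g(0)$ for all $t \in [0,1]$.

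Next I would expand the two pieces of $g(t)$. For the quadratic term, $\tfrac12\norm{y - f_t}_n^2 = \tfrac12\norm{y - \hat f}_n^2 - t\langle y - \hat f,\, f^O - \hat f\rangle_n + \tfrac{t^2}{2}\norm{f^O - \hat f}_n^2$. For the penalty, convexity of the seminorm $P$ (triangle inequality plus homogeneity) gives $P(f_t) \le (1-t)P(\hat f) + t P(f^O)$. Substituting both into $g(t) \ge g(0)$ and cancelling the common $\tfrac12\norm{y - \hat f}_n^2$ and $\lambda P(\hat f)$ terms, then collecting the $\lambda P(\hat f)$ contributions, leaves
\[
\lambda t\, P(\hat f) \;\le\; -\,t\,\langle y - \hat f,\, f^O - \hat f\rangle_n + \tfrac{t^2}{2}\norm{f^O - \hat f}_n^2 + \lambda t\, P(f^O)
\]
for every $t \in (0,1]$. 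Dividing by $t > 0$ and letting $t \downarrow 0$ annihilates the $O(t)$ remainder, yielding $\lambda P(\hat f) \le -\langle y - \hat f,\, f^O - \hat f\rangle_n + \lambda P(f^O)$, which rearranges to exactly $\langle y - \hat f,\, f^O - \hat f\rangle_n \le \lambda P(f^O) - \lambda P(\hat f)$.

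I do not anticipate a genuine obstacle here; the argument is routine. The only two points needing care are (i) that we use \emph{linearity} of $\mathcal{F}$, not merely convexity, to guarantee $f_t \in \mathcal{F}$ — though convexity would in fact suffice since $f_t$ is a convex combination — and (ii) that we must not differentiate $P$ directly, because a seminorm need not be differentiable; this is circumvented by using the convexity upper bound on $P(f_t)$ and passing to the limit in $t$ rather than computing $g'(0)$. If (3) specifies $\hat f$ only as a (possibly non-unique) minimizer, nothing changes, since the proof uses only the inequality $g(t) \ge g(0)$.
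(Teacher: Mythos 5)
Your proof is correct, and the underlying idea — parametrize along the segment $f_t = \hat f + t(f^O - \hat f)$, use $f_t \in \mathcal{F}$ and the optimality of $\hat f$ at $t=0$ — is the same as the paper's. The execution differs in a way worth noting. The paper formally invokes a ``stationarity KKT condition'' and writes a term $\lambda\langle \dot P(\hat f), f^O - \hat f\rangle_n$, treating $\dot P$ as an object that can be paired with directions and then bounded by the subgradient inequality; this is somewhat loose, since a seminorm need not be differentiable, and the paper does not define what $\dot P$ means. Your version avoids that issue entirely: you never differentiate $P$, you simply use $P(f_t) \le (1-t)P(\hat f) + tP(f^O)$ (triangle inequality plus absolute homogeneity), substitute into $g(t) \ge g(0)$, cancel, divide by $t$, and let $t \downarrow 0$ so the $\tfrac{t}{2}\norm{f^O - \hat f}_n^2$ remainder vanishes. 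This is the same first-variation argument made fully rigorous with elementary inequalities, and it is actually the cleaner of the two proofs.
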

\begin{proof}
	Let $\epsilon \in [0, 1]$, and define $f_{\epsilon} = \hat{f} + \epsilon(f^O - \hat{f})$, such that $\epsilon$ represents the deviation in $f_{\epsilon}$ from $\hat{f}$ towards $f^O$. Since $\mathcal{F}$ is a linear subspace, it is convex, which means that a convex combination of functions in $\mathcal{F}$, $f_{\epsilon} \equiv (1-\epsilon)\hat{f} + \epsilon f^O \in \mathcal{F}$ as well, since $\hat{f} \in \mathcal{F}$ by definition (3).
	
	The original problem that we are working with is $\hat{f} = \arg\min_{f\in \mathcal{F}} (1/2) \norm{y - f}_n^2 + \lambda P(f)$. Since $\hat{f}$ is the minimizer for this problem, we know that $\hat{\epsilon} = 0$ is the minimizer for the problem in $\epsilon \in [0, 1]$
	\begin{align*}
		(1/2) \norm{y - (1-\epsilon) \hat{f} - \epsilon f^O}_n^2 + \lambda P\{(1-\epsilon) \hat{f} + \epsilon f^O\}.
	\end{align*}
	Then, using the stationarity KKT condition, since there is an equivalence between the unconstrained problem above and a constrained problem ($P(f_{\epsilon}) \leq c$):
	\begin{align*}
		\partial \left[(1/2) \norm{y - f_{\epsilon}}_n^2 + \lambda P(f_{\epsilon})\right]\big|_{\epsilon = 0} \in 0.
	\end{align*}
	For each of the subgradients, we have
	\begin{align*}
		\partial (1/2) \norm{y - f_{\epsilon}}_n^2|_{\epsilon=0} &= -\langle y - f_{\epsilon}, (d/d\epsilon) f_{\epsilon} \rangle_n|_{\epsilon = 0} \\
		&= - \langle y - f_{\epsilon}, -\hat{f} + f^O \rangle_n|_{\epsilon = 0} \\
		&= -\langle y - \hat{f}, f^O - \hat{f} \rangle _n \\
		\partial \lambda P(f_{\epsilon})|_{\epsilon = 0} &= \lambda \langle\dot{P}(f_{\epsilon}), (d/d\epsilon) f_{\epsilon} \rangle_n|_{\epsilon = 0} \\
		&= \lambda \langle\dot{P}(\hat{f}), f^O -\hat{f} \rangle_n \\
		\text{(definition of a subgradient)} &\leq \lambda \left[P(f^O) - P(\hat{f})\right].
	\end{align*}
	Therefore, $\langle y - \hat{f}, f^O - \hat{f} \rangle _n \leq \lambda \left\{P(f^O) - P(\hat{f})\right\}$.
\end{proof}

We now combine Lemmas \ref{lemma:s1.1} and \ref{lemma:s1.2} to obtain an expression for the MSE between $f^*$ and $\hat{f}$ in terms of the MSE between $\hat{f}$ and $f^O$, the MSE between $f^O$ and $f^*$, and quantities related to $\hat{f}$ and $f^O$. Because we will eventually construct a sequence of $f^O$ that converge to $f^*$, the result of Lemma \ref{lemma:2.1} will be critical in determining our convergence rates. This is not the final result that we will want to use; in Theorem \ref{main-thm}, we will refine this bound further so that it does not depend at all on $\hat{f}$ (except for $\norm{\hat{f} - f^*}_n^2$), and can be manipulated solely using a sequence of $f^O$.

\begin{lemma}\label{lemma:2.1}
	Define $f^*$ as in (1) (potentially not in $\mathcal{F}$), and $\hat{f}$ as in (3) (in $\mathcal{F}$). Let $f^O$ be any other function in $\mathcal{F}$. Suppose that $P$ is convex. Then
	\begin{align*}
		\norm{\hat{f} - f^*}_n^2 + \norm{\hat{f} - f^O}_n^2 \leq \norm{f^O - f^*}_n^2 + 2\langle \epsilon, \hat{f} - f^O\rangle_n + 2\lambda \{P(f^O) - P(\hat{f})\}.
	\end{align*}
\end{lemma}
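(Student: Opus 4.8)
The plan is to start from the algebraic identity of Lemma \ref{lemma:s1.1}, which already matches the left-hand side of the claimed inequality exactly:
\begin{align*}
	\norm{\hat{f} - f^*}_n^2 + \norm{\hat{f} - f^O}_n^2 = \norm{f^* - f^O}_n^2 + 2\langle \hat{f} - f^*, \hat{f} - f^O \rangle_n .
\end{align*}
So the entire task reduces to bounding the cross term $2\langle \hat{f} - f^*, \hat{f} - f^O \rangle_n$ from above by $2\langle \epsilon, \hat{f} - f^O\rangle_n + 2\lambda\{P(f^O) - P(\hat{f})\}$.

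The key manipulation is to insert and subtract $y$ inside the inner product, splitting it into a ``noise'' piece and an ``optimality'' piece. Writing $\hat{f} - f^* = (\hat{f} - y) + (y - f^*)$ and using bilinearity,
\begin{align*}
	\langle \hat{f} - f^*, \hat{f} - f^O \rangle_n = \langle \hat{f} - y,\, \hat{f} - f^O \rangle_n + \langle y - f^*,\, \hat{f} - f^O \rangle_n .
\end{align*}
For the first piece I would rewrite $\langle \hat{f} - y, \hat{f} - f^O\rangle_n = \langle y - \hat{f}, f^O - \hat{f}\rangle_n$ and invoke Lemma \ref{lemma:s1.2} (applicable since $f^O \in \mathcal{F}$ and, under the convexity of $P$, the subgradient argument there goes through), giving $\langle y - \hat{f}, f^O - \hat{f}\rangle_n \le \lambda\{P(f^O) - P(\hat{f})\}$. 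For the second piece I would identify $\epsilon = y - f^*$ from the data-generating model in (1), so that $\langle y - f^*, \hat{f} - f^O\rangle_n = \langle \epsilon, \hat{f} - f^O\rangle_n$.

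Substituting both bounds back and multiplying by $2$ yields
\begin{align*}
	2\langle \hat{f} - f^*, \hat{f} - f^O \rangle_n \le 2\langle \epsilon, \hat{f} - f^O\rangle_n + 2\lambda\{P(f^O) - P(\hat{f})\},
\end{align*}
and combining with the Lemma \ref{lemma:s1.1} identity gives the claim. There is no real obstacle here — the proof is a short chain of an exact identity plus one application each of Lemmas \ref{lemma:s1.1} and \ref{lemma:s1.2}; the only point requiring a moment of care is keeping the signs straight in the $\pm y$ decomposition and making sure the orientation of the arguments in the inner product matches the statement of Lemma \ref{lemma:s1.2} before applying it.
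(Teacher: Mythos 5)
Your proof is correct and follows essentially the same route as the paper's: start from the Lemma \ref{lemma:s1.1} identity, split the cross term via $\hat{f} - f^* = (\hat{f} - y) + (y - f^*)$, bound the optimality piece with Lemma \ref{lemma:s1.2}, and recognize the noise piece as $\langle \epsilon, \hat{f} - f^O\rangle_n$.
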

\begin{proof}
The proof relies on application of Lemmas \ref{lemma:s1.1} and \ref{lemma:s1.2}. Recall that $\epsilon = y - f^*$. Then:
\begin{align*}
	\text{(Lemma \ref{lemma:s1.1})} \quad  \norm{f^* - \hat{f}}_n^2 + \norm{\hat{f} - f^O}_n^2 &\leq \norm{f^* - f^O}_n^2 + 2\langle \hat{f} - f^*, \hat{f} - f^O \rangle_n \\
	&= \norm{f^* - f^O}_n^2 + 2\langle y - \hat{f}, f^O - \hat{f} \rangle_n - 2\langle \epsilon, f^O - \hat{f} \rangle_n \\
	\text{(Lemma \ref{lemma:s1.2})} \quad &\leq \norm{f^* - f^O}_n^2 + 2\lambda \{P(f^O) - P(\hat{f})\} - 2 \langle \epsilon, f^O - \hat{f}\rangle_n \\
	&= \norm{f^* - f^O}_n^2 + 2\lambda \{P(f^O) - P(\hat{f})\} + 2 \langle \epsilon, \hat{f} - f^O \rangle_n
\end{align*}
\end{proof}

The purpose of Lemma \ref{lemma:vandegeer} is to obtain a high probability upper bound on the last term of Lemma \ref{lemma:2.1}, $\langle \epsilon, \hat{f} - f^O\rangle_n$. We do this through using a concentration result from \citet{geerEmpiricalProcessesMEstimation2009} which uses the metric entropy of a normalized version of a function class $\mathcal{F}$ such that its penalty function $P$ is bounded by 1. Thus, in order to apply this result to our scenario in Theorem 1, we need to use the fact that the metric entropy of $\{f \in \mathcal{F}_k|P_k(f) \leq 1\}$ has the desired polynomial bound \citep{mammenNonparametricRegressionQualitative1991, geerEmpiricalProcessesMEstimation2009}.

\begin{lemma}\label{lemma:vandegeer}
	Suppose data are generated according to (1), and $\hat{f}$ is defined as in (3), for some $\lambda_n > 0$. Suppose $P$ is a seminorm and $\mathcal{F}$ is a linear subspace of $\mathcal{L}_2[-1, 1]$. Let $f_0^O, ..., f_n^O \in \mathcal{F}$ with $P(f^O_k) > 0$ for all $k$. Suppose the metric entropy of $\mathcal{F}$ has the following polynomial bound, with $\alpha \in (0, 2)$ and for some $A > 0$
	\begin{align*}
		H[\delta, \{f \in \mathcal{F} | P(f) \leq 1\}, ||\cdot||_n] \leq A\delta^{-\alpha}.
	\end{align*}
	Then
	\begin{align*}
		\sup_{f \in \mathcal{F}} \frac{\langle \epsilon, f - f_n^O \rangle_n}{\norm{f - f_n^O}_n^{1-\alpha/2} \{P(f) + P(f_n^O)\}^{\alpha/2}} \leq C_{\varepsilon} n^{-1/2}
	\end{align*}
	with probability at least $1 - \varepsilon$, where $C_{\varepsilon}$ only depends on $\varepsilon$.
\end{lemma}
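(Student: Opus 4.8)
The plan is to reduce the statement to a standard maximal inequality for weighted empirical processes, of the kind proven in \citet{geerEmpiricalProcessesMEstimation2009} (her Lemma 8.4 / Theorem 9.1, the ``peeling device'' argument). First I would reduce to the normalized function class: set $g = f - f_n^O$, so that $g$ ranges over the linear subspace $\mathcal{F}$ (since $f_n^O \in \mathcal{F}$ and $\mathcal{F}$ is linear), and observe that by the triangle-type bound $P(f) + P(f_n^O) \geq P(f - f_n^O) = P(g)$ for a seminorm, so it suffices to bound $\langle \epsilon, g\rangle_n / (\norm{g}_n^{1-\alpha/2} P(g)^{\alpha/2})$ uniformly over $g \in \mathcal{F}$ with $P(g) > 0$. (One has to be slightly careful about the case $P(g) = 0$, $g \neq 0$; then $g$ lies in the null space of the seminorm and one argues separately, or the metric entropy bound forces this null space to be finite-dimensional so the corresponding term is $O_p(n^{-1/2})$ by a $\chi^2$ argument. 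I would fold this in as a short remark.)

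Next I would run the peeling argument. Partition the range of $(\norm{g}_n, P(g))$ dyadically: for integers $s,t$, consider the ``shell'' $\{g : 2^{s-1} < \norm{g}_n \leq 2^s, \ 2^{t-1} < P(g) \leq 2^t\}$. Rescaling $g \mapsto g / P(g)$ brings us into the unit-penalty ball $\{f \in \mathcal{F} : P(f) \leq 1\}$, whose $\delta$-metric entropy is bounded by $A\delta^{-\alpha}$ by hypothesis; on this ball the relevant $L_2(P_n)$-radius is $\leq 2^{s-t}$. The chaining/Dudley bound (e.g. the sub-Gaussianity of $\langle \epsilon, \cdot\rangle_n$ under the noise assumptions behind (1), combined with $\int_0^{r} \sqrt{H[\delta,\cdot]}\,d\delta \lesssim \int_0^r \delta^{-\alpha/2}\,d\delta \lesssim r^{1-\alpha/2}$ since $\alpha < 2$) gives, with high probability on that shell,
\begin{align*}
	\sup \langle \epsilon, g\rangle_n \lesssim n^{-1/2}\, 2^{t}\, (2^{s-t})^{1-\alpha/2} = n^{-1/2}\, \norm{g}_n^{1-\alpha/2} P(g)^{\alpha/2},
\end{align*}
which is exactly the denominator. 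Summing the exceptional probabilities over all shells $(s,t)$ — each of which can be made $\leq \varepsilon\, 2^{-|s|-|t|}$ times a constant by putting enough slack into the constant $C_\varepsilon$ — yields the uniform bound with probability at least $1 - \varepsilon$, with $C_\varepsilon$ depending only on $\varepsilon$, $A$, $\alpha$, and the noise scale.

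The main obstacle is making the peeling summable and keeping the constant's dependence clean: one must control infinitely many shells, including those where $\norm{g}_n$ is very small relative to $P(g)$ (so $2^{s-t}$ is tiny — here the entropy integral is dominated by its value at the radius $2^{s-t}$, which is fine) and those where $\norm{g}_n$ is large (controlled because the supremum is effectively over a bounded-radius ball after normalization, and the Gaussian concentration tail decays fast enough to absorb the $2^{-|s|}$ factors). I would also need the technical input that $\langle\epsilon,\cdot\rangle_n$ restricted to the normalized class concentrates around its mean with a sub-Gaussian tail — this is where the noise assumptions in the data-generating model (1) and the boundedness of $\mathcal{F}$ in $\mathcal{L}_2[-1,1]$ enter, and it is the one place I would have to be careful to invoke exactly the hypotheses available rather than a stronger boundedness assumption. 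Modulo these bookkeeping points, the result is a direct citation-strength application of \citet{geerEmpiricalProcessesMEstimation2009}, and I would present it as such, spelling out the normalization and the seminorm triangle inequality in detail since those are the steps specific to our $P$ and $\mathcal{F}$.
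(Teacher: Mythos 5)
Your proposal is correct in substance, but it takes a genuinely different route from the paper's proof. The paper treats \citet{geerEmpiricalProcessesMEstimation2009}'s Lemma 8.4 as a black box: it normalizes directly to the unit penalty ball by setting $g = (f - f_n^O)/\{P(f) + P(f_n^O)\}$, observes by the triangle inequality that $P(g) \leq 1$ so the hypothesized entropy bound transfers to this class $\mathcal{G}$, cites the lemma to get a sub-Gaussian tail for $\sup_{g\in\mathcal{G}} |\langle\epsilon,g\rangle_n|/\norm{g}_n^{1-\alpha/2}$, and then unwinds the normalization to restore $\{P(f)+P(f_n^O)\}^{\alpha/2}$ in the denominator. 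You, by contrast, re-derive the content of that lemma from scratch via a two-dimensional dyadic peeling over $(\norm{g}_n, P(g))$ combined with the Dudley entropy integral on each shell, having first replaced $P(f)+P(f_n^O)$ by $P(f-f_n^O)$ via the same triangle inequality. What the paper's route buys is economy and cleanliness: because $P(f_n^O) > 0$ is assumed, $P(f)+P(f_n^O)$ is bounded away from zero, so the normalized $g$ is always well-defined and there is no degenerate case. Your reduction to $P(f-f_n^O)$ in the denominator introduces the case $P(f-f_n^O)=0$, which you flag but do not fully close: the sketched finite-dimensional $\chi^2$ argument controls $\langle\epsilon,g\rangle_n/\norm{g}_n$, not $\langle\epsilon,g\rangle_n/\norm{g}_n^{1-\alpha/2}$, and the latter is not scale-invariant in $g$ — the cleaner move, if you insist on the self-contained derivation, is to keep the normalization by $P(f)+P(f_n^O)$ as the paper does and peel only in $\norm{g}_n$, which is exactly the structure inside van de Geer's own proof. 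What your route buys is a self-contained argument that does not require the reader to accept Lemma 8.4 on faith, and it makes transparent where $\alpha < 2$ enters (convergence of $\int_0^r \delta^{-\alpha/2}\,d\delta$). Both are valid; the paper's is the citation-strength version, yours unpacks it.
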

\begin{proof}
	Assuming that $P(f) + P(f^O_n) > 0$, then we can instead work with the normalized functions $(f - f^O_n) / \{P(f) + P(f_n^O)\}$ because $f - f_n^O \in \mathcal{F}$ since $\mathcal{F}$ is a linear subspace, and because
	\begin{align*}
	P[(f - f^O_n) / \{P(f) + P(f_n^O)\}] = P(f - f^O_n)/\{P(f) + P(f_n^O)\}
	\end{align*}
	and $P(f - f^O_n) \leq \{P(f) + P(f_n^O)\}$ by triangle inequality, so $P[(f - f^O_n) / \{P(f) + P(f_n^O)\}] \leq 1$, thus the polynomial bound holds for these normalized functions.
	Let $g := (f - f^O_n) / \{P(f) + P(f_n^O)\} \in \mathcal{G}$, which satisfies the same polynomial bound. Lemma 8.4 of \citet{geerEmpiricalProcessesMEstimation2009} tells us that as long as the polynomial bound given in the statement of the lemma is satisfied, that $\sup_{g \in \mathcal{G}} \norm{g}_n \leq R$, and
	\begin{align*}
		\max_{i = 1, ..., n} K^2 \E\{\exp(|\epsilon_i|^2/K^2) - 1\} \leq \sigma^2_0
	\end{align*}
	for some $K$ and $\sigma_0^2$,
	then for some constant $c$ depending on $A$, $\alpha$, $R$, $K$, and $\sigma_0$, for all $T \geq c$,
	\begin{align*}
		P \left\{\sup_{g \in \mathcal{G}} \frac{\left|\frac{1}{\sqrt{n}} \sum_{i=1}^{n}  \epsilon_i g(x_i) \right|}{\norm{g}_{n}^{1-\alpha/2}}  \geq T \right\} \leq c \exp(-T^2 / c^2).
	\end{align*}
	(setting $\epsilon_i = W_i$ in the notation of \citet{geerEmpiricalProcessesMEstimation2009}). This is a condition on sub-Gaussianity; since we are working with Gaussian errors $\epsilon_i$, this condition is satisfied. We can flip this probability, note that the numerator in the supremum is $\sqrt{n} \langle \epsilon, g\rangle_n$, and set $\varepsilon = c \exp(-T^2 / c^2)$, to get
	\begin{align*}
		P \left(\sup_{g \in \mathcal{G}} \frac{\left|\langle \epsilon, g\rangle_n\right|}{\norm{g}_{n}^{1-\alpha/2}}  \leq n^{-1/2} C_{\varepsilon} \right) \geq 1 - \varepsilon
	\end{align*}
	where $C_{\varepsilon} = c\sqrt{\log(c/\varepsilon)}$ (solving for $T$ in $\varepsilon$). Now recalling the definition of $g$, and the fact that the supremum over $g \in \mathcal{G}$ is greater than the supremum over $f \in \mathcal{F}$ for a function of $f - f_n^O$, with $f_n^O$ fixed, we have
	\begin{align*}
		\langle \epsilon, g\rangle_n &= \langle \epsilon, f - f_n^O\rangle_n \{P(f) - P(f_n^O)\}^{-1} \\
		\norm{g}_n^{1-\alpha/2} &= \norm{f - f_n^O}_n^{1-\alpha/2} \{P(f) - P(f_n^O)\}^{\alpha/2-1}.
	\end{align*}
	Therefore, we have
	\begin{align*}
		P \left[\sup_{f \in \mathcal{F}} \frac{\left|\langle \epsilon, f - f_n^O\rangle_n\right|}{\norm{f - f_n^O}_{n}^{1-\alpha/2}\{P(f) - P(f_n^O)\}^{\alpha/2}}  \leq n^{-1/2} C_{\varepsilon} \right] \geq 1 - \varepsilon.
	\end{align*}
\end{proof}

Lemma \ref{lemma:vandegeer} is the last piece which allows us to prove Theorem 1. The outline of Theorem 1 is that we use the expression from Lemma \ref{lemma:2.1}, apply Lemma \ref{lemma:vandegeer} to deal with the $\epsilon$ terms, then apply several further inequalities. Towards the end of the proof, we see that our choice of $\lambda_n$ allows us to have the simplified expression that we present in the statement of Theorem 1.

\begin{manualtheorem}{1}[Theorem 2.2 of \citet{simonConvergenceRatesNonparametric2021}]
	Suppose data are generated according to (1), $\hat{f}$ is defined as in (3) for some $\lambda_n > 0$ and $\mathcal{F}_k$ defined as in (4). Let $f_0, f_1, ... \in \mathcal{F}_k$ with $P_k(f_j) > 0$ for all $j$.
	If we choose
 \begin{align*}
     \lambda_n = O_p\{n^{-2/(2+k^{-1})}P_k^{(k^{-1}-2)/(k^{-1}+2)}(f_n)\},
 \end{align*}
 then
\begin{align}\label{inequality}
||\hat{f} - f^*||_n^2 \leq ||f^* - f_n||_n^2 + O_p\{\lambda_n P_k(f_n)\}.
\end{align}
\end{manualtheorem}
\begin{proof}
	From Lemma \ref{lemma:2.1}, we have the following upper bound:
	\begin{align*}
		\norm{\hat{f} - f^*}_n^2 + \norm{\hat{f} - f^O}_n^2 + 2\lambda P(\hat{f}) \leq \norm{f^O - f^*}_n^2 + 2\langle \epsilon, \hat{f} - f^O\rangle_n + 2\lambda P(f^O).
	\end{align*}
	Now we can apply Lemma \ref{lemma:vandegeer} to deal with the $2\langle \epsilon, \hat{f} - f^O\rangle_n$ term, with $\alpha = 1/k$ \citep{mammenNonparametricRegressionQualitative1991, geerEmpiricalProcessesMEstimation2009}. Specifically, using Lemma \ref{lemma:vandegeer}, with $f_n^O \equiv f^O$ and choosing $\hat{f}$ as the $f \in \mathcal{F}$ (since the probability in Lemma \ref{lemma:vandegeer} will be at least as large if we focus on one $f$ rather than the supremum over $f \in \mathcal{F}$), we have with at least probability $1 - \epsilon$,
	\begin{align*}
		\langle \epsilon, \hat{f} - f^O_n\rangle_n \leq C_\epsilon n^{-1/2} \norm{\hat{f} - f_n^O}_n^{1-\alpha/2} \left\{P(\hat{f}) + P(f_n^O) \right\}^{\alpha/2}.
	\end{align*}
	Therefore, with at least probability $1 - \epsilon$:
	\begin{align}
		&\norm{\hat{f} - f^*}_n^2 + \norm{\hat{f} - f^O}_n^2 + 2\lambda P(\hat{f}) \leq \nonumber\\
		&\hspace{1in} \norm{f^O - f^*}_n^2 + 2 \underbrace{C_\epsilon n^{-1/2} \norm{\hat{f} - f_n^O}_n^{1-\alpha/2} \left\{P(\hat{f}) + P(f_n^O) \right\}^{\alpha/2}}_{(I)} + 2\lambda P(f^O).\label{eq:fullineq}
	\end{align}
	Now we focus on upper bounding the new term $(I)$ from Lemma \ref{lemma:vandegeer} using Young's Inequality. Recall that Young's Inequality says that $x y \leq x^a / a + y^b / b$ when $1 / a + 1/b = 1$. If we let
	\begin{align*}
		x &= \norm{\hat{f} - f_n^O}_n^{1-\alpha/2} \\
		y &= C_\epsilon n^{-1/2} \left\{P(\hat{f}) + P(f_n^O) \right\}^{\alpha/2} \\
		a &= 4/(2-\alpha) \\
		b &= 4/(2+\alpha)
	\end{align*}
	then applying Young's Inequality, we have
	\begin{align*}
		(I) \equiv x y &\leq x^a/a + y^b/b \\
		&= \frac{\left(\norm{\hat{f} - f_n^O}_n^{1-\alpha/2}\right)^{4/(2-\alpha)}}{4/(2-\alpha)} + \frac{\left[C_\epsilon n^{-1/2} \left\{P(\hat{f}) + P(f_n^O) \right\}^{\alpha/2}\right]^{4/(2+\alpha)}}{4/(2+\alpha)} \\
		&\leq \frac{1}{2}\norm{\hat{f} - f_n^O}_n^{2} + C_\epsilon'n^{-2/(2+\alpha)} \left\{P(\hat{f}) + P(f_n^O) \right\}^{2\alpha/(2+\alpha)}.
	\end{align*}
	This was a convenient choice of $x$ and $y$ because we now have cancellation for the $\norm{\hat{f} - f_n^O}_n^{2}$ term on the LHS and RHS of \eqref{eq:fullineq} to obtain
	\begin{align}\label{eq:simp-ineq}
		\norm{\hat{f} - f^*}_n^2 + 2\lambda P(\hat{f}) \leq \norm{f^O - f^*}_n^2 + C''_{\epsilon} n^{-2/(2+\alpha)} \left\{P(\hat{f}) + P(f_n^O) \right\}^{2\alpha/(2+\alpha)} + 2\lambda P(f_n^O).
	\end{align}
	Our final step is to take care of $P(\hat{f})$ and $P(f_n^O)$, once we choose $\lambda \equiv \lambda_n$ appropriately as a function of $n$.
	First consider the case where $P(\hat{f}) \leq P(f_n^O)$. We have, choosing $\lambda_n = O_P(n^{-2/(2+\alpha)} P^{-(2-\alpha)/(2+\alpha)}(f_n^O))$
	\begin{align*}
		\norm{\hat{f} - f^*}_n^2 &\leq \norm{f^O - f^*}_n^2 + C''_{\epsilon} n^{-2/(2+\alpha)} \left[P(\hat{f}) + P(f_n^O) \right]^{2\alpha/(2+\alpha)} + 2\lambda_n P(f_n^O) \\
		&\leq \norm{f^O - f^*}_n^2 + C'''_{\epsilon} n^{-2/(2+\alpha)} P^{2\alpha/(2+\alpha)}(f_n^O) + 2\lambda_n P(f_n^O) \\
		&= \norm{f^O - f^*}_n^2 + C'''_{\epsilon} \big\{\underbrace{n^{-2/(2+\alpha)} P^{-(2-\alpha)/(2+\alpha)}(f_n^O)}_{O_P(\lambda_n)}\big\}  P(f_n^O) + 2\lambda_n P(f_n^O) \\
		&= \norm{f^O - f^*}_n^2 + O_P\{\lambda_n P(f^O_n)\} + 2\lambda_n P(f^O_n) \\
		&\leq \norm{f^O - f^*}_n^2 + O_P\{\lambda_n P(f^O_n)\}.
	\end{align*}
	Now consider the case where $P(\hat{f}) \geq P(f_n^O)$.
 In this case, we need to upper bound $C''_{\epsilon} n^{-2/(2+\alpha)} \left\{P(\hat{f}) + P(f_n^O) \right\}^{2\alpha/(2+\alpha)}$ using only $P(f_n^O)$ and some function of $\lambda_n$. We can again use Young's Inequality with
	\begin{align*}
		x &= \left\{P(\hat{f}) + P(f_n^O) \right\}^{2\alpha/(2+\alpha)} (2\lambda_n)^{2\alpha/(2+\alpha)}\\
		y &= C''_{\epsilon} n^{-2/(2+\alpha)} (2\lambda_n)^{-2\alpha/(2+\alpha)}\\
		a &= (2+\alpha)/2\alpha\\
		b &= (2+\alpha)/(2-\alpha)
	\end{align*}
	to obtain the following:
	\begin{align*}
		xy \equiv C''_{\epsilon} n^{-2/(2+\alpha)} \left\{P(\hat{f}) + P(f_n^O) \right\}^{2\alpha/(2+\alpha)} &\leq \frac{\left[\left\{P(\hat{f}) + P(f_n^O) \right\}^{2\alpha/(2+\alpha)} (2\lambda_n)^{2\alpha/(2+\alpha)}\right]^{(2+\alpha)/2\alpha}2\alpha}{(2+\alpha)} \\
		&\quad\quad\quad\quad + \frac{\left\{C''_{\epsilon} n^{-2/(2+\alpha)} (2\lambda_n)^{-2\alpha/(2+\alpha)}\right\}^{(2+\alpha)/(2-\alpha)}(2-\alpha)}{(2+\alpha)} \\
		&= 2\lambda_n \left\{P(\hat{f}) + P(f_n^O) \right\}(2\alpha)/(2+\alpha) \\
		&\quad\quad\quad\quad + C'''_{\epsilon}n^{-2/(2-\alpha)}\lambda_n^{-2\alpha/(2-\alpha)} (2-\alpha)/(2+\alpha) \\
		&\leq 2\lambda_n \left\{P(\hat{f}) + P(f_n^O) \right\} + C'''_{\epsilon}n^{-2/(2-\alpha)}\lambda_n^{-2\alpha/(2-\alpha)}.
	\end{align*}
	The $2\lambda_n P(\hat{f})$ now cancels out on both sides of \eqref{eq:simp-ineq}. Furthermore,
	\begin{align*}
		n^{-2/(2-\alpha)}\lambda_n^{-2\alpha/(2-\alpha)} &= n^{-2/(2-\alpha)}\left\{n^{-2/(2+\alpha)} P^{-(2-\alpha)/(2+\alpha)}(f_n^O) \right\}^{2\alpha/(\alpha-2)} \\
		&= n^{-2/(2-\alpha) -4\alpha/\{(2+\alpha)(\alpha-2)\}} P^{2\alpha/(2+\alpha)}(f_n^O) \\
		&= n^{-2/(2+\alpha)} P^{-(2-\alpha)/(2+\alpha)}(f_n^O)  P(f_n^O).
	\end{align*}
	Therefore, $C'''_{\epsilon}n^{-2/(2-\alpha)}\lambda_n^{-2\alpha/(2-\alpha)} = O_P\{\lambda_n P(f_n^O)\}$, and \eqref{eq:simp-ineq} becomes
	\begin{align*}
		\norm{\hat{f} - f^*}_n^2  \leq \norm{f^O - f^*}_n^2 + O_P\{\lambda_n P(f_n^O)\}.
	\end{align*}
	This completes the proof.

\end{proof}

\newpage
\section{Representation of Functions as Piecewise Polynomials}\label{app:birman}

The purpose of this section is to work with piecewise constant approximations to functions with bounded total variation. Lemma \ref{f-epsilon} is directly from \citet{birmanPIECEWISEPOLYNOMIALAPPROXIMATIONSFUNCTIONS1967}, which we include here for clarity. Presumably, the result of Lemma \ref{lemma:penalty-approx} was known to the authors of \citet{birmanPIECEWISEPOLYNOMIALAPPROXIMATIONSFUNCTIONS1967}, but we did not find a clear proof, so we include our own here. We now provide some justification of how the following lemmas will be helpful.

Although we define our approximation function $f^O$ as the convolution of $f^*$ with a higher-order kernel, upper bounding the approximation error between $f^O$ and $f^*$ is much easier if we first decompose it into the approximation error between $f^*$ and a piecewise polynomial approximation to $f^*$ (which we later term $f_{\epsilon}$), and the approximation error between $f_{\epsilon}$ and $f_{\epsilon}$ convolved with the same higher order kernel. We show how to perform this decomposition in Lemma \ref{approx-error}. When working with the approximation error between $f_{\epsilon}$ and $f_{\epsilon}$ convolved with a higher order kernel, we will find in Lemma \ref{approx-error} that it is a function of the $\ell$th-order (with $f^* \in \mathcal{F}_{\ell}$) total variation of $f_{\epsilon}$. Using Lemma \ref{lemma:penalty-approx}, we can claim that this is no larger than that of the true function $f^*$. 

Lemma \ref{f-epsilon} tells us about the existence of piecewise constant approximations (we use this to approximate the \textit{$(\ell-1)$th order derivative} of $f^*$), which we can then integrate to obtain an $(\ell-1)$th-order piecewise polynomial approximation to $f^*$.

\begin{lemma}[Theorem 3.1 of \citet{birmanPIECEWISEPOLYNOMIALAPPROXIMATIONSFUNCTIONS1967}]\label{f-epsilon}
	Let $f \in \mathcal{F}_{1}$. Then for any $\epsilon > 0$ and $\omega = 1$, there exists a partition $\Xi$ of $\mathcal{X} \equiv [0, 1]$ such that $|\Xi| \leq \epsilon^{-1/\omega}$, and
	\begin{align*}
		||f - f_{\epsilon}||_{\infty} \leq C \epsilon P_{1}(f)
	\end{align*}
	where $f_{\epsilon}$ is a piecewise constant function with partitions $\Xi$.
\end{lemma}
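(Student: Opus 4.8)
The plan is to obtain $\Xi$ by equipartitioning the \emph{range} of the cumulative variation of $f$ rather than its domain. Write $V := P_1(f)$ for the total variation of $f$ on $\mathcal{X} = [0,1]$. If $V = 0$ then $f$ is constant and the statement is trivial with the one-piece partition, so assume $V > 0$. Let $v(x) := \mathrm{TV}\bigl(f;[0,x]\bigr)$ be the cumulative variation, nondecreasing from $v(0) = 0$ to $v(1) = V$, and recall three elementary facts: additivity $\mathrm{TV}(f;[a,c]) = \mathrm{TV}(f;[a,b]) + \mathrm{TV}(f;[b,c])$ for $a \le b \le c$; the pointwise bound $|f(s) - f(t)| \le \mathrm{TV}(f;I)$ whenever $s,t \in I$; and monotonicity of $\mathrm{TV}(f;\cdot)$ under inclusion of intervals.

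First I would fix an integer $m$ with $m \le \epsilon^{-1}/2$ and $m \asymp \epsilon^{-1}$ (taking $m = 1$ when $\epsilon$ is too large for this), put $t_j := jV/m$ for $j = 0, \dots, m$, and choose breakpoints $0 = x_0 \le x_1 \le \dots \le x_m = 1$ that ``invert'' $v$ at these levels, e.g.\ $x_j := \inf\{x : v(x) \ge t_j\}$, additionally inserting a breakpoint at each of the fewer than $m$ points at which $f$ jumps by more than $V/m$. Discarding coincident breakpoints leaves a genuine partition $\Xi$ into at most $2m \le \epsilon^{-1}$ intervals; I take the pieces half-open, $I_j = [x_{j-1}, x_j)$ with the last one closed, and define $f_\epsilon \equiv f(x_{j-1}^+)$ on $I_j$ (the right-hand limit, which is harmless since only an $L^\infty$ bound is sought).

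By the choice of breakpoints the variation of $f$ on each piece is at most one level gap, so for $x \in I_j$,
\begin{align*}
|f(x) - f_\epsilon(x)| \;\le\; \mathrm{TV}\bigl(f; I_j\bigr) \;\le\; t_j - t_{j-1} \;=\; \frac{V}{m},
\end{align*}
with the terminal piece handled identically via $\mathrm{TV}(f;[x_{m-1},1]) = V - v(x_{m-1}) \le V/m$. Taking the supremum over $x$ and using $m \asymp \epsilon^{-1}$ gives $\norm{f - f_\epsilon}_\infty \le V/m \le C\,\epsilon\, P_1(f)$ for an absolute constant $C$ absorbing the rounding in $m$, which is the claim.

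The one step that genuinely requires care — and which I expect to be the main obstacle in a fully rigorous write-up — is the bookkeeping at the jump discontinuities of $f$: there $v$ need be neither right-continuous nor invertible, and a single jump may exceed $V/m$, so the breakpoints must be chosen using one-sided limits of $v$ (and supplemented at the large jumps as above), one must verify that the half-open pieces confine all of the approximation error to the interiors — where the displayed estimate $\mathrm{TV}(f;I_j) \le t_j - t_{j-1}$ holds — and one must check that collapsing coincident breakpoints only decreases $|\Xi|$, so that $|\Xi| \le \epsilon^{-1}$ survives. Everything else — additivity and monotonicity of total variation, monotonicity of $v$, and the trivial large-$\epsilon$ regime — is routine.
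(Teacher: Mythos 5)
Your construction is the standard ``equipartition the cumulative variation'' argument, and it is sound for the lemma as stated (modulo the jump-discontinuity bookkeeping you rightly flag). The paper, however, does not prove this lemma at all: it simply cites Theorem 3.1 of Birman and Solomyak with a parameter dictionary ($p=1$, $\alpha=1$, $m=1$, $\omega=\alpha m^{-1}=1$, and the identification $\|f\|_{L^\alpha_p}=P_\alpha(f)$ when $p=1$). So you are supplying a self-contained proof where the paper delegates; that is a genuine addition, and for $\ell=1$ it is substantially more elementary than the general Birman--Solomyak machinery.

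There is one subtlety you could not have seen blind but that matters for the paper: the downstream Lemma \ref{lemma:penalty-approx} leans on the specific Birman--Solomyak construction of $f_\epsilon$, namely that it agrees with $f$ \emph{in mean} on each piece,
\begin{align*}
\int_{x_{j-1}}^{x_j} f_\epsilon(x)\,dx \;=\; \int_{x_{j-1}}^{x_j} f(x)\,dx ,
\end{align*}
which is the identity \eqref{partition-integral} used there to produce the crossing points $x_j^\pm$. Your choice $f_\epsilon \equiv f(x_{j-1}^+)$ on $I_j$ proves the sup-norm bound but does not deliver this integral-matching property, so as written your $f_\epsilon$ is not the object the rest of the appendix operates on. The fix is cheap and you should make it explicit: define $f_\epsilon$ on $I_j$ to be the \emph{average} $\frac{1}{|I_j|}\int_{I_j} f$, which satisfies \eqref{partition-integral} by construction, and the same estimate goes through because $|f(x)-\frac{1}{|I_j|}\int_{I_j} f|\le \sup_{s,t\in I_j}|f(s)-f(t)|\le \mathrm{TV}(f;I_j)\le V/m$. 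With that substitution (and the jump-point bookkeeping carried out as you outline) your argument both proves the lemma and matches the construction the paper actually uses.
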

\begin{proof} See Theorem 3.1 of \citet{birmanPIECEWISEPOLYNOMIALAPPROXIMATIONSFUNCTIONS1967}, where in our case, $p = 1$, $\alpha = 1$, $m = 1$, $\epsilon = n^{-\omega}$, and $\omega \equiv \alpha m^{-1} = 1$. Note that when $p = 1$, their $||f||_{L_p^{\alpha}} = P_{\alpha}(f)$.
\end{proof}

\begin{lemma}\label{lemma:penalty-approx}
	Let $f_{\epsilon}$ be the piecewise constant approximation to $f \in \mathcal{F}_{1}$ given in Lemma \ref{f-epsilon}. Then $P_{1}(f_{\epsilon}) \leq P_{1}(f)$.
\end{lemma}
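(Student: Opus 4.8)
The proof hinges on a structural feature of the approximant $f_\epsilon$ supplied by the construction behind Lemma \ref{f-epsilon} that is not recorded in its statement: on each cell $I_j = [t_{j-1}, t_j]$ of the partition $\Xi = \{0 = t_0 < \dots < t_m = 1\}$, the value $c_j$ of $f_\epsilon$ is a \emph{local} object built from $f$ on $I_j$ --- either a constant of best local uniform approximation (the midrange $\tfrac12(\sup_{I_j} f + \inf_{I_j} f)$), a cell average $|I_j|^{-1}\int_{I_j} f$, or a sampled value $f(\xi_j)$ --- and in every case $c_j$ lies in $[\inf_{I_j} f, \sup_{I_j} f]$. Since $f_\epsilon$ is a step function, $P_1(f_\epsilon) = \sum_{j=1}^{m-1} |c_{j+1} - c_j|$ (with the empty sum equal to $0$ when $m = 1$, which is immediate), so everything reduces to bounding this sum of jumps by $P_1(f)$.

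I would treat the midrange case first, as it is the cleanest. Write $M_j = \sup_{I_j} f$ and $m_j = \inf_{I_j} f$, so $c_j = \tfrac12(M_j + m_j)$ and, by the triangle inequality,
\begin{align*}
	P_1(f_\epsilon) \;\leq\; \tfrac12 \sum_{j=1}^{m-1} |M_{j+1} - M_j| \;+\; \tfrac12 \sum_{j=1}^{m-1} |m_{j+1} - m_j|.
\end{align*}
The key sublemma is that each of these two sums is at most $P_1(f)$. To bound $\sum_j |M_{j+1} - M_j|$, fix $\delta > 0$ and choose in each cell a point $p_j \in I_j$ with $f(p_j) \geq M_j - \delta$ (possible even if the supremum is not attained, since $f$ is bounded). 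As $p_j \in [t_{j-1}, t_j]$ and $p_{j+1} \in [t_j, t_{j+1}]$, the sequence $p_1 \leq p_2 \leq \dots \leq p_m$ is weakly increasing, so $\sum_{j=1}^{m-1} |f(p_{j+1}) - f(p_j)|$ is a variation sum of $f$ and hence is $\leq P_1(f)$; meanwhile $f(p_j) \in [M_j - \delta, M_j]$ for every $j$ forces $|f(p_{j+1}) - f(p_j)| \geq |M_{j+1} - M_j| - \delta$. Summing and letting $\delta \downarrow 0$ gives $\sum_j |M_{j+1} - M_j| \leq P_1(f)$, and the identical argument with near-infimal points $q_j$ (so $f(q_j) \leq m_j + \delta$) handles the $m_j$ sum. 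Combining the two yields $P_1(f_\epsilon) \leq P_1(f)$.

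The step I expect to be the main obstacle is precisely the decision to route the estimate through the monotone quantities $M_j$ and $m_j$ rather than attacking $c_j$ head-on: for a general sandwiched $c_j$ one would want a point of $I_j$ at which $f$ is close to $c_j$ \emph{from the side dictated by the sign of the adjacent jump}, and a single cell may be asked to serve its two neighbours from incompatible sides, which is awkward to organise; passing to suprema and infima removes the sign ambiguity at the negligible price of the factor $\tfrac12$. The remaining cases are routine and only reinforce the same point. If $c_j = f(\xi_j)$ is a sampled value the bound is immediate, since $\sum_j |c_{j+1} - c_j| = \sum_j |f(\xi_{j+1}) - f(\xi_j)|$ is already a variation sum along the weakly increasing points $\xi_1 \leq \dots \leq \xi_m$. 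If $c_j = |I_j|^{-1}\int_{I_j} f$ is a cell average, one writes $c_{j+1} - c_j$ as the double average of $f(t) - f(s)$ over $(s,t) \in I_j \times I_{j+1}$, bounds each increment by $V(t) - V(s)$ where $V(x)$ denotes the total variation of $f$ on $[0,x]$, and telescopes the (nondecreasing) cell averages of $V$ to $V(1) - V(0) = P_1(f)$. In all cases the precise constant appearing in Lemma \ref{f-epsilon} plays no role; only the rule by which the construction assigns a constant to each cell matters, and the sup/inf non-attainment is absorbed harmlessly into the $\delta$-slack.
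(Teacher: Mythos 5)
Your proof is correct, and it takes a genuinely different route from the paper's. The relevant structural fact is the one you guess third, the cell-average rule $c_j = |I_j|^{-1}\int_{I_j} f$; the paper states this explicitly as $\int_{x_j}^{x_{j+1}} f_\epsilon = \int_{x_j}^{x_{j+1}} f$ in \eqref{partition-integral}. The paper then argues by constructing an explicit partition of $[0,1]$ on which the variation sum of $f$ is at least that of $f_\epsilon$: for continuous $f$ it finds crossing points $x_j^*$ with $f(x_j^*) = f_\epsilon(x_j^*)$ via the intermediate value theorem, and for discontinuous $f$ it identifies overshoot/undershoot points $x_j^\pm$ (which must exist because a function cannot be strictly above or below its own cell average on the whole cell) and selects the ones aligned with the turning points of the step function $f_\epsilon$. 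Your telescoping argument --- writing $|c_{j+1} - c_j|$ as a double average of $|f(t) - f(s)|$ over $I_j \times I_{j+1}$, bounding it by $V(t) - V(s)$ with $V$ the cumulative variation function, and telescoping the monotone cell averages $\bar V_j = |I_j|^{-1}\int_{I_j} V$ to $\bar V_m - \bar V_1 \le V(1) - V(0) = P_1(f)$ --- avoids the continuity case split and the somewhat delicate bookkeeping of which side of $f_\epsilon$ the chosen point should lie on, and handles non-attainment of suprema with no $\delta$-slack at all. The midrange and sampled-value cases you work through are a reasonable hedge given you did not know the construction, but they do not apply; ironically the case you treat most sketchily is the one that matters. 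It would be worth promoting the double-average/telescope sketch to a full two- or three-line display, since it is the whole proof.
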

\begin{proof}
	We aim to show that the total variation of $f_{\epsilon}$ is no more than that of $f$, the function that $f_{\epsilon}$ is approximating, i.e., that
	\begin{align*}
		\sup_{x_1, ..., x_M} \sum_{m=1}^{M} |f_{\epsilon}(x_{m+1}) - f_{\epsilon}(x_{m})| \leq \sup_{x_1, ..., x_M} \sum_{m=1}^{M} |f(x_{m+1}) - f(x_m)|.
	\end{align*}
	We know that $f_{\epsilon}(x)$ can be written as $f_{\epsilon} = \beta_0 + \sum_{j=1}^{J} \beta_j 1(x > x_j)$. Furthermore, based on the construction of the partitions and approximating function $f_{\epsilon}$ in \citet{birmanPIECEWISEPOLYNOMIALAPPROXIMATIONSFUNCTIONS1967},
	\begin{align}
			\int_{x_j}^{x_{j+1}} f_{\epsilon}(x)dx = \int_{x_j}^{x_{j+1}} f(x)dx.\label{partition-integral}
		\end{align}
		First consider the case where $f$ is a continuous function. Define $\mathcal{X}_1$ as the partition that defines each of the piecewise constant jumps in $f_{\epsilon}$, i.e., $\mathcal{X}_1 = \{x_1, ..., x_J\}$, where $x_{J+1} = 1$. This partition achieves the supremum over all finite partitions for $f_{\epsilon}$ because it is exactly the total variation (the function only changes at each of these piecewise constant jumps). We know that $f$ is continuous, and we also know that since $f_{\epsilon}$ is constant within $(x_{j}, x_{j+1}]$, there must exist points $(x^-_{j}, x^+_{j}) \in (x_{j}, x_{j+1}]$ such that $f(x^-_{j}) \leq f_{\epsilon}(x^-_{j})$ and $f(x^+_{j}) \geq f_{\epsilon}(x^+_{j})$. By the intermediate value theorem, there exists a point $x_{j}^* \in [x^-_{j}, x^+_{j}]$ such that $f(x_j^*) = f_{\epsilon}(x_j^*)$. Therefore, if we construct the partition $\mathcal{X}_1^* := \{x_1^*, ..., x_J^*\}$, the variation of $f$ based on this partition is exactly equal to that of $f_{\epsilon}$ on $\mathcal{X}_1$. The total variation is defined as the supremum over all finite partitions, and we have found a partition for $f$ such that the variation is equal to the supremum over all partitions for $f_{\epsilon}$; thus, $P_1(f) \geq P_1(f_{\epsilon})$.
	
	Now consider the case where $f$ is not necessarily continuous, where the intermediate value theorem no longer applies. However, because of how the partitions have been constructed, we still know that in order for \eqref{partition-integral} to hold, within each partition, $(x_{j}, x_{j+1}]$, there must exist points $x^-_{j}, x^+_{j} \in (x_{j}, x_{j+1}]$ such that $f(x^-_{j}) \leq f_{\epsilon}(x^-_{j})$ and $f(x^+_{j}) \geq f_{\epsilon}(x^+_{j})$. Heuristically, we will use these to construct a new partition for $f$ where the function values of $f$ ``overshoot'' or ``undershoot'' the values of $f_{\epsilon}$ whenever $f_{\epsilon}$ changes direction.

	Define the sets
	\begin{align*}
		\mathcal{X}^+ &:= \{ x^+_{j} \in \mathcal{X}_1: f_{\epsilon}(x_j) \geq \max\{f_{\epsilon}(x_{j-1}), f_{\epsilon}(x_{j+1})\}\} \\
		\mathcal{X}^- &:= \{ x^{-}_{j} \in \mathcal{X}_1: f_{\epsilon}(x_j) \leq \min\{f_{\epsilon}(x_{j-1}), f_{\epsilon}(x_{j+1})\}\}
	\end{align*}
	which contain all of the partitions in $\mathcal{X}_1$ where $f_{\epsilon}$ changes direction. We define the partition set $\mathcal{X}_2 = \{\mathcal{X}^+ \cup \mathcal{X}^-\}$. We now need to deal with the endpoints. If $f_{\epsilon}(x_2) - f_{\epsilon}(x_1) > 0$, then we add $x^-_1$ to $\mathcal{X}_2$, otherwise, we add $x^+_1$. Conversely, if $f_{\epsilon}(x_{J+1}) - f_{\epsilon}(x_{J}) > 0$, we add $x^+_{J+1}$ to $\mathcal{X}_2$, otherwise we add $x^-_{J+1}$.
	
	We will refer to the items of  $\mathcal{X}_2$ as $x'$. Based on how we have defined the partitions in $\mathcal{X}_2$, for all $i \in 1, ..., |\mathcal{X}_2| - 1$, we know that $|f_{\epsilon}(x'_{i+1}) - f_{\epsilon}(x'_{i})| \leq |f(x'_{i+1}) - f(x'_i)|$. Furthermore, $f_{\epsilon}$ has the same variation in $\mathcal{X}_2$ as it does in $\mathcal{X}_1$ because within neighboring points $x'_i, x'_{i+1} \in \mathcal{X}_2$, $f_{\epsilon}$ is a monotonic step function. This means that the $\mathcal{X}_2$ partition achieves the supremum over all finite partitions for $f_{\epsilon}$. Therefore,
	\begin{align*}
		P_1(f_{\epsilon}) &:= \sum_{i=1}^{|\mathcal{X}_2| - 1} |f_{\epsilon}(x'_{i+1}) - f_{\epsilon}(x'_{i})| \\
		&\leq \sum_{i=1}^{|\mathcal{X}_2| - 1} |f(x'_{i+1}) - f'(x_{i})| \leq P_1(f),
	\end{align*}
	where the last inequality comes from the fact that $\mathcal{X}_2$ is only one possible partition, so it must be less than or equal to the supremum over all partitions. Thus, $P_1(f_{\epsilon}) \leq P_1(f)$.
\end{proof}

\newpage
\section{Improved Convergence Rates}\label{app:new}

\subsection{Higher-Order Kernels}

We define our approximation function as the convolution of the true function $f^*$ with a higher-order kernel. We will work with the scaled kernel, $H_{k,\delta}: u \to (1/\delta)H_k(u/\delta)$ such that it is supported on $[-\delta, \delta]$. Let $f_{\delta,k}^O$ be the convolution of the true function $f^*$ and a $k$th-order kernel as defined as in Proposition \ref{kernels} below:
\begin{align*}
	f_{\delta,k}^O := f^* \star H_{k,\delta} \equiv x &\to \int_{-\infty}^{\infty} f^*(x-t) H_{k, \delta}(t) dt \equiv \int_{-\delta}^{\delta} f^*(x-t) H_{k, \delta}(t) dt.
 \end{align*}
 Using this approximation function, we will derive upper bounds on both the approximation error and the penalty function. Our results are summarized in Lemma \ref{approx-error} and \ref{penalty}, which we then use in Theorem \ref{main-thm}. Proposition \ref{kernels} is justified using Section 1.2.2, and specifically Proposition 1.3 in \citet{tsybakov2009}. Note that depending on the order of kernel needed in the proof, the kernel given by Proposition \ref{kernels} may necessarily have both negative and positive values.

\begin{proposition}[Existence of Higher-Order Kernels]\label{kernels}
	For any integer $k$, there exists a symmetric function $H_k$ such that $H_k$
	\begin{itemize}
		\item Has bounded support on $[-1,1]$
		\item Integrates to 1: $\int H_k(u) du = 1$
		\item Is bounded and has bounded derivatives: $\sup_{u\in[-1,1]}|H_k^{(\ell)}(u)| < C_{k,\ell}$ for some constant $C_{k,\ell}$ and $\ell = 0, 1, ..., k-1$.
		\item $\int u^{\ell} H_k(u) du = 0$ for all $\ell < k$
		\item $\int |u|^k H_k(u) du \leq C_k$ for some constant $C_k$.
	\end{itemize}
\end{proposition}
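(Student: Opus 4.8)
The plan is to exhibit $H_k$ explicitly, following Proposition 1.3 of \citet{tsybakov2009}: a kernel whose first $k-1$ moments agree with those of a point mass at the origin is exactly a kernel that reproduces every polynomial of degree at most $k-1$ under integration against it, and such a reproducing kernel is produced by truncating an orthonormal polynomial expansion. First I would fix a nonnegative, symmetric weight $\psi$ on $[-1,1]$ that is positive on a set of full measure (the simplest choice is $\psi\equiv 1/2$, which makes the $\varphi_j$ below the Legendre polynomials; a smooth bump vanishing at $\pm1$ works too and has the bonus that the zero-extension of $H_k$ is globally smooth), and let $\varphi_0,\varphi_1,\dots$ be the polynomials orthonormal with respect to $\psi$, so that $\deg\varphi_j=j$ and $\int_{-1}^1\varphi_i(u)\varphi_j(u)\psi(u)\,du$ equals $1$ if $i=j$ and $0$ otherwise. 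Then I would set
\[
H_k(u):=\psi(u)\sum_{j=0}^{k-1}\varphi_j(0)\,\varphi_j(u)\quad\text{for } u\in[-1,1],\qquad H_k(u):=0\ \text{ otherwise.}
\]
The one computation doing the real work is the reproducing identity: given any polynomial $p$ of degree $\le k-1$, write $p=\sum_{j=0}^{k-1}c_j\varphi_j$ (possible since the $\varphi_j$, $j\le k-1$, span all such polynomials); then $\int H_k(u)p(u)\,du=\sum_{i,j}c_i\varphi_j(0)\int\varphi_i\varphi_j\psi\,du=\sum_j c_j\varphi_j(0)=p(0)$.

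From here the five bullets are routine. Symmetry holds because $\psi$ is even, so each $\varphi_j$ has the parity of $j$; hence $\varphi_j(0)=0$ for odd $j$ and every surviving summand $\psi\varphi_j$ is even. Bounded support is true by construction. Applying the reproducing identity to $p\equiv1$ gives $\int H_k=1$, and to $p(u)=u^\ell$ with $1\le\ell\le k-1$ gives $\int u^\ell H_k(u)\,du=0$ (for odd $\ell$ this also follows from symmetry). Boundedness of $H_k$ and of its derivatives of orders $0,\dots,k-1$ on $[-1,1]$ holds because $H_k$ is the product of the fixed smooth function $\psi$ with a fixed polynomial of degree $\le k-1$, hence $C^\infty$ on the compact interval, so each derivative attains a finite maximum there; the bounding constants $C_{k,\ell}$ depend only on $k,\ell$. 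Finally $|u|^k\le1$ on $[-1,1]$, so $\int|u|^kH_k(u)\,du\le\int_{-1}^1|H_k(u)|\,du\le2\sup_{[-1,1]}|H_k|=:C_k$.

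I do not expect a genuine obstacle here: once the reproducing identity is isolated, the rest is bookkeeping about parity and about polynomials on a compact set. The only point worth a remark is the parenthetical warning in the statement that $H_k$ may have to take negative values; this is automatic rather than something to engineer, since the $\ell=2$ instance of the vanishing-moment condition, $\int u^2H_k=0$, is incompatible with $H_k\ge0$ together with $\int H_k=1$ whenever $k\ge3$, forcing a sign change. If a later section needs the zero-extension of $H_k$ to be $C^{k-1}$ on all of $\mathbb{R}$ rather than merely smooth on $[-1,1]$, that is precisely the case in which one takes $\psi$ to be a smooth bump vanishing to sufficiently high order at $\pm1$; the argument above is unchanged.
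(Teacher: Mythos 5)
Your construction is exactly the Legendre/orthonormal-polynomial kernel of Proposition 1.3 in \citet{tsybakov2009}, which is the sole justification the paper gives for this proposition (it cites Tsybakov rather than supplying its own argument), and the verification of the five bullets from the reproducing identity is correct. The closing remark about taking $\psi$ to be a bump vanishing at $\pm 1$ to obtain a globally $C^{k-1}$ zero-extension is a useful observation beyond what the paper says, since later lemmas implicitly differentiate the convolution $f\star H_{k,\delta}$ through the kernel.
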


\subsection{Working with Piecewise Polynomials and Convolutions}

In this section, we provide some lemmas which allow us to prove our main results on the approximation error in Lemma 1 and penalty function in Lemma 2.  The key results that we will use from this section is that the convolution of a higher-order kernel with a polynomial of lesser order returns exactly the polynomial we started with. This is used when we decompose our approximation error into the approximation error between $f_{\epsilon}$ (a piecewise polynomial approximation to $f^*$), and the convolution of $f_{\epsilon}$ with a higher-order kernel. We also derive the $k$th order total variation of a convolution of a function $f \in \mathcal{F}_{\ell}$ with a $k$th-order kernel ($k > \ell$), and show that it is a function of the $\ell$th-order total variation of the original function, the bandwidth of the higher-order kernel $\delta$ and the order of the kernel $k$ relative to $\ell$.

\begin{lemma}[Representation of a Piecewise Polynomial]\label{f-epsilon-rep}
	If $f$ is an $a$th order piecewise polynomial with continuous derivatives up to order $(a - 1)$th order, it can be represented as
	\begin{align*}
		f(x) = f_0(x) + \sum_{j=1}^{J} \beta_{j} (x - d_j)^{a} 1(x - d_j \geq 0)
	\end{align*}
	where $f_0(x)$ is an $a$th order polynomial with continuous derivatives up to order $(a - 1)$ as well, and where $J$ is the number of partitions.
\end{lemma}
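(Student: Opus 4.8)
The plan is to proceed by induction on the number of knots $J$, stripping off one truncated power term at a time starting from the rightmost knot. Write the breakpoints of $f$ in increasing order as $d_1 < d_2 < \cdots < d_J$, so that on each of the intervals cut out by these points $f$ coincides with a polynomial of degree at most $a$. For the base case $J = 0$ the function $f$ is a single polynomial of degree at most $a$ on all of $[0,1]$; take $f_0 = f$ and the empty sum, and there is nothing to prove.

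For the inductive step I would assume the representation holds for every piecewise polynomial of degree at most $a$ with $C^{a-1}$ smoothness and at most $J-1$ knots. Let $q$ be the polynomial that $f$ equals on the interval immediately to the left of $d_J$ and $p$ the polynomial it equals on the interval immediately to the right — which, since $d_J$ is the rightmost knot, is all of $[d_J, 1]$. Both have degree at most $a$, so $r := p - q$ is a polynomial of degree at most $a$. The hypothesis that $f$ has continuous derivatives through order $a-1$ at $d_J$ says exactly that the one-sided derivatives match there, i.e. $r^{(\ell)}(d_J) = 0$ for $\ell = 0, 1, \ldots, a-1$. Expanding $r$ in its Taylor series about $d_J$ then forces $r(x) = \beta_J (x - d_J)^a$ with $\beta_J := r^{(a)}(d_J)/a!$: $r$ has a zero of order at least $a$ at $d_J$, and since $\deg r \leq a$ it must be a constant multiple of $(x - d_J)^a$.

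Next I would set $g(x) := f(x) - \beta_J (x - d_J)^a \, 1(x \geq d_J)$ and check that $g$ again belongs to the class and has one fewer knot. A short one-sided-derivative computation shows $(x - d_J)^a 1(x \geq d_J) \in C^{a-1}(\mathbb{R})$ — its derivatives of order up to $a-1$ vanish and are continuous at $d_J$, only the $a$th derivative jumping — so $g$ is still a piecewise polynomial of degree at most $a$ with continuous derivatives through order $a-1$. On the interval immediately left of $d_J$ we have $g = f = q$, and on $[d_J, 1]$ we have $g = p - \beta_J(x-d_J)^a = q$; hence $g$ equals the single polynomial $q$ on a whole neighborhood of $d_J$, so $g$ has no break at $d_J$ and is a piecewise polynomial with at most the $J-1$ knots $d_1, \ldots, d_{J-1}$. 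Applying the inductive hypothesis to $g$ and adding $\beta_J(x-d_J)^a 1(x\geq d_J)$ back then yields the claimed representation for $f$, with $f_0$ the polynomial supplied by the hypothesis for $g$.

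The only steps requiring real care, and the likeliest place for a slip, are the two smoothness-bookkeeping facts: (i) that $C^{a-1}$-continuity of $f$ at a knot is equivalent to the polynomial jump $r$ having a zero of order $a$ there — this is what makes the truncated power of order exactly $a$, rather than some lower order, appear; and (ii) that subtracting a single term $\beta_J(x-d_J)^a 1(x\geq d_J)$ does not leave the class of $C^{a-1}$ piecewise polynomials, so that the induction genuinely reduces the knot count. Both reduce to matching one-sided derivatives at a point; everything else is accounting for the number of breakpoints.
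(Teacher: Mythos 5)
Your proof is correct, and it takes a genuinely different route from the paper's. The paper begins by positing the \emph{general} truncated-power representation $f(x) = f_0(x) + \sum_{j=1}^J \sum_{m=0}^a \beta_{j,m}(x-d_j)^m 1(x-d_j\ge 0)$ for an arbitrary (not yet smooth) $a$th-order piecewise polynomial, differentiates $b$ times, and then uses $C^{a-1}$ continuity to show $\beta_{j,a-1}=0$ (from $b=a-1$), then $\beta_{j,a-2}=0$ (from $b=a-2$), and so on, concluding that only $\beta_{j,a}$ survives. Your approach instead inducts on the number of knots $J$: at the rightmost knot $d_J$ the polynomial jump $r=p-q$ has a zero of order $a$ there, hence $r=\beta_J(x-d_J)^a$; subtracting $\beta_J(x-d_J)^a 1(x\ge d_J)$ removes that knot while preserving degree and $C^{a-1}$ smoothness, and the inductive hypothesis finishes the job. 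The two arguments rest on the same local fact (a degree-$\le a$ polynomial vanishing to order $a$ at a point is a scalar multiple of $(x-d_j)^a$, equivalently that only the top-order truncated-power coefficient is compatible with $C^{a-1}$ smoothness), but yours is the more self-contained version: it constructs the representation from scratch rather than taking the unconstrained truncated-power expansion as a given starting point, at the modest cost of the induction bookkeeping.
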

\begin{proof}
    Consider a general $a$th-order piecewise polynomial function,
    \begin{align*}
        f(x) = f_0(x) + \sum_{j=1}^J \sum_{m=0}^{a} \beta_{j,m} (x - d_j)^{m} 1(x - d_j \geq 0).
    \end{align*}
    where $f_0(x)$ is an $a$th-order polynomial with continuous derivatives up to order $a - 1$.
    Let $b < a$. Then,
    \begin{align*}
        f^{(b)}(x) = f^{(b)}_0(x) + \sum_{j=1}^{J} \sum_{m=b}^{a} \beta_{j,m}(x - d_j)^{m-b} 1(x - d_j \geq 0) (m)!/(m-b)!.
    \end{align*}
    Consider the case where $b = a - 1$. In order for $f^{(a-1)}(x)$ to be continuous, all $\beta_{j,a-1}$ must be zero, otherwise there could be discontinuities. We can continue this exercise with $b = a - 2$, all the way to $b = 0$, noting each time that the next group of coefficients must also be zero for the same reasoning.
    Therefore, the only non-zero coefficients allowed are $\beta_{j,a}$.
\end{proof}

\begin{lemma}[Convolution with a Polynomial]\label{poly-conv}
	Let $h(x) = \beta_0 x^{\ell}$, and $k > \ell$. Then $h = h \circ H_k$, where $H_k$ is defined in Proposition \ref{kernels}.
\end{lemma}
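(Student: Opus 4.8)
The plan is to compute the convolution $h \circ H_k$ directly using the moment-vanishing property of the kernel $H_k$ from Proposition \ref{kernels}. First I would write out the definition of the convolution at a point $x$, namely $(h\circ H_k)(x) = \int_{-1}^{1} h(x-t) H_k(t)\, dt = \beta_0 \int_{-1}^{1} (x-t)^{\ell} H_k(t)\, dt$. The natural move is to expand $(x-t)^{\ell}$ via the binomial theorem, obtaining $\sum_{m=0}^{\ell} \binom{\ell}{m} x^{\ell - m} (-t)^m$, and then exchange the finite sum with the integral so that each term contributes $\beta_0 \binom{\ell}{m} x^{\ell-m} (-1)^m \int_{-1}^{1} t^m H_k(t)\, dt$.

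The key step is then to invoke the property $\int u^{\ell'} H_k(u)\, du = 0$ for all $\ell' < k$. Since $m$ ranges over $0, 1, \dots, \ell$ and we are assuming $\ell < k$, every exponent $m$ satisfies $m \le \ell < k$, so every moment integral with $m \ge 1$ vanishes. The only surviving term is $m = 0$, which gives $\beta_0 x^{\ell} \int_{-1}^{1} H_k(t)\, dt = \beta_0 x^{\ell} = h(x)$, using that $H_k$ integrates to $1$. Since this holds for every $x$, we conclude $h = h \circ H_k$.

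I anticipate no serious obstacle here — the argument is a one-line binomial expansion plus the defining moment conditions of the kernel. The only points requiring minor care are: (i) confirming that the exchange of the (finite) sum and the integral is trivially justified, and (ii) being careful about which scaling of the kernel is in play. The statement as written uses the unscaled $H_k$ supported on $[-1,1]$, so the bare moment conditions of Proposition \ref{kernels} apply verbatim; if one instead wanted the scaled version $H_{k,\delta}$, the same computation goes through after a change of variables $u = t/\delta$, since scaling preserves the vanishing-moment property. I would also remark that the hypothesis $k > \ell$ is exactly what makes all relevant moments vanish, so the result is sharp in that sense.
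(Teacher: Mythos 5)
Your proof is correct and takes essentially the same approach as the paper's: a binomial expansion of $(x-t)^{\ell}$, followed by the vanishing-moment conditions for $m \ge 1$ and the normalization $\int H_k = 1$ to leave only $\beta_0 x^{\ell}$. If anything, your presentation is slightly cleaner, as the paper first does a somewhat roundabout expansion of $h(x)-h(x_0)$ before arriving at the same calculation.
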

\begin{proof}
	We can prove this using the binomial expansion:
	\begin{align*}
		h(x) - h(x_0) &= \beta_0(x^\ell - x_0^\ell) - \beta_0(x - x_0)^\ell + \beta_0(x - x_0)^\ell \\
		&= \beta_0(x - x_0)^\ell - \beta_0 \sum_{j=1}^{\ell-1} {\ell\choose j} x^{\ell - j} (-x_0)^j.
	\end{align*}
	Using the definition of convolution, and the expansion above setting $x_0 = t$:
	\begin{align*}
		(h \circ H_k)(x) &= \int_{-\infty}^{\infty} h(x - t) H_k(t) dt \\
		&= \int_{-\infty}^{\infty} \beta_0 (x - t)^{\ell} H_k(t) dt \\
		&= \beta_0 x^{\ell} \int_{-\infty}^{\infty} H_k(t) dt - \beta_0\int_{-\infty}^{\infty}  t^{\ell} H_k(t) dt + \beta_0 \sum_{j=1}^{\ell-1} {\ell\choose j}x^{\ell - j} (-1)^j \int t^{j} H_k(t) dt.
	\end{align*}
	Since $H_k$ is a kernel that satisfies Proposition \ref{kernels}, all of the integrals that have $t^{j} H_k(t)$, $0 < j \leq \ell$ are zero, and $\int_{-\infty}^{\infty} H_k(t) dt = 1$. Therefore, $(h \circ H_k)(x) = h(x)$.
\end{proof}

\begin{lemma}[$k$th Order Total Variation of a Convolution]\label{lemma:tv-convolution}
Let $f \in \mathcal{F}_{\ell}$, i.e., that it has bounded $\ell$th order total variation. Let $f^O_{\delta,k} = f \star H_{\delta,k}$, with $k > \ell$. Then
	\begin{align*}
	P_k(f_{\delta,k}^O) \leq 2\left\{(1/\delta)^{(k-\ell)}C_{k,k-\ell} \right\}P_{\ell}(f).
\end{align*}

\end{lemma}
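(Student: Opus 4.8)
The plan is to reduce the $k$th-order total variation of the convolution $f^O_{\delta,k}$ to the $\ell$th-order total variation of $f$ by moving $k-\ell$ derivatives onto $f$ and the remaining derivatives onto the kernel. Recall that $P_k(g) = \mathrm{TV}(g^{(k)})$ (the total variation of the $k$th weak derivative, or equivalently $\int |g^{(k+1)}|$ when that makes sense). The key observation is that differentiating a convolution can be distributed: $(f \star H_{\delta,k})^{(k)} = f^{(\ell)} \star H_{\delta,k}^{(k-\ell)}$, since we can put $\ell$ derivatives on $f$ and $k-\ell$ derivatives on $H_{\delta,k}$. This uses $k > \ell$ so that $H_{\delta,k}$ (being $C^{k-1}$ with bounded derivatives by Proposition \ref{kernels}) can absorb $k-\ell \le k-1$ derivatives. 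First I would write $(f^O_{\delta,k})^{(k)} = f^{(\ell)} \star H^{(k-\ell)}_{\delta,k}$ and then bound the total variation of this convolution.

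Next I would bound $\mathrm{TV}\big(f^{(\ell)} \star H^{(k-\ell)}_{\delta,k}\big)$. Since $f \in \mathcal{F}_\ell$, the measure $d\mu := d f^{(\ell)}$ (the distributional derivative, or $f^{(\ell)}$ as a function of bounded variation) has total mass $P_\ell(f)$. The convolution $f^{(\ell)} \star H^{(k-\ell)}_{\delta,k}$ is itself differentiable, and its derivative is $d\mu \star H^{(k-\ell)}_{\delta,k}$ in the appropriate sense; hence its total variation is $\int \big| \int H^{(k-\ell)}_{\delta,k}(x - t)\, d\mu(t)\big|\, dx \le \int \int |H^{(k-\ell)}_{\delta,k}(x-t)|\, dx\, |d\mu|(t) = \|H^{(k-\ell)}_{\delta,k}\|_{L^1} \cdot P_\ell(f)$ by Fubini/Tonelli. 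So it remains to compute $\|H^{(k-\ell)}_{\delta,k}\|_{L^1}$.

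For the scaled kernel $H_{\delta,k}(u) = (1/\delta) H_k(u/\delta)$, the chain rule gives $H^{(k-\ell)}_{\delta,k}(u) = (1/\delta)^{k-\ell+1} H_k^{(k-\ell)}(u/\delta)$, and since $H_k$ is supported on $[-1,1]$ and $\sup |H_k^{(k-\ell)}| \le C_{k,k-\ell}$ by Proposition \ref{kernels}, a change of variables yields $\|H^{(k-\ell)}_{\delta,k}\|_{L^1} = (1/\delta)^{k-\ell}\|H_k^{(k-\ell)}\|_{L^1} \le (1/\delta)^{k-\ell}\cdot 2 C_{k,k-\ell}$ (the factor $2$ from the length of the support). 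Combining, $P_k(f^O_{\delta,k}) \le 2\{(1/\delta)^{k-\ell} C_{k,k-\ell}\} P_\ell(f)$, which is exactly the claimed bound.

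The main obstacle is making the distributional manipulations rigorous when $f^{(\ell)}$ is merely of bounded variation rather than absolutely continuous: one has to justify that $(f \star H_{\delta,k})$ is genuinely $k$-times differentiable with $k$th derivative $f^{(\ell)} \star H^{(k-\ell)}_{\delta,k}$, and that the total variation of the latter is controlled by $\|H^{(k-\ell)}_{\delta,k}\|_{L^1} P_\ell(f)$ via Fubini applied to the signed measure $df^{(\ell)}$. This can be handled by writing $f^{(\ell)}$ as (a constant plus) the antiderivative of a finite signed measure $\mu$ with $\|\mu\| = P_\ell(f)$, differentiating under the integral sign (legitimate because $H_k^{(k-\ell)}$ is bounded with compact support), and applying the standard inequality $\mathrm{TV}(\nu \star \phi) \le \|\nu\|\,\|\phi\|_{L^1}$ for a finite signed measure $\nu$ and $\phi \in L^1$. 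Everything else is a routine change of variables.
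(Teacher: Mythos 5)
Your proof is correct and reaches the same bound, but it packages the key step differently from the paper. Both proofs distribute derivatives so that $k-\ell$ fall on the kernel: the paper works with $(f^O_{\delta,k})^{(k-1)} = f^{(\ell-1)} \star H_{\delta,k}^{(k-\ell)}$, then unpacks the partition definition of $P_k$, bounds each partition increment by $(1/\delta)^{k-\ell} C_{k,k-\ell} \int_{-1}^1 \lvert f^{(\ell-1)}(x_{m+1}-\delta s) - f^{(\ell-1)}(x_m-\delta s)\rvert\,ds$, swaps the finite sum with the integral, and then uses shift-invariance of the supremum to absorb the $\delta s$ shift and obtain $P_\ell(f)$. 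You instead invoke the abstract inequality $\mathrm{TV}(\nu \star \phi) \le \lVert \nu \rVert\, \lVert \phi \rVert_{L^1}$ for a finite signed measure $\nu$ and $\phi \in L^1$, together with the explicit scaling $\lVert H_{\delta,k}^{(k-\ell)}\rVert_{L^1} = (1/\delta)^{k-\ell}\lVert H_k^{(k-\ell)}\rVert_{L^1} \le 2(1/\delta)^{k-\ell}C_{k,k-\ell}$. Your route is more general and makes the mechanism (Young's inequality for measures) transparent, at the cost of having to justify the distributional identifications you flag at the end; the paper's route is more elementary, working entirely with the sup-over-partitions definition and never needing to form $f^{(\ell)}$ as a measure, so the rigor concern you raise simply does not arise.

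One small indexing discrepancy to note: you recall $P_k(g) = \mathrm{TV}(g^{(k)})$, whereas the paper (equation in the proof) uses $P_k(g) = \mathrm{TV}(g^{(k-1)})$, and accordingly distributes derivatives as $(f^O)^{(k-1)} = f^{(\ell-1)} \star H_{\delta,k}^{(k-\ell)}$ rather than your $(f^O)^{(k)} = f^{(\ell)} \star H_{\delta,k}^{(k-\ell)}$. Since you apply your convention consistently to both $P_k$ and $P_\ell$, the off-by-one cancels — in both cases exactly $k-\ell$ derivatives land on the kernel — and you arrive at the identical constant $2(1/\delta)^{k-\ell}C_{k,k-\ell}$. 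Still, worth aligning with the paper's convention, since elsewhere the paper relies on $P_1$ being the ordinary total variation of $f$ itself (e.g.\ in Lemma \ref{lemma:penalty-approx}), which matches $\mathrm{TV}(f^{(0)})$, not $\mathrm{TV}(f^{(1)})$.
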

\begin{proof}
	An alternative definition of the $k$th order total variation of $f$ is
\begin{align}\label{total-variation}
	P_k(f) \equiv \sup_{x_1, ..., x_M} \sum_{m=1}^{M} \left|f^{(k-1)}(x_{m+1}) - f^{(k-1)}(x_{m})\right|
\end{align}
where $(x_1, ..., x_M)$ is a finite partition. Based on the definition of $H_{k,\delta}$, we have that
\begin{align}
	H_{k,\delta}^{(\ell-1)}(t) &= \left\{(1/\delta)H_k(t/\delta)\right\}^{(\ell-1)}\nonumber \\
	&= (1/\delta) (1/\delta)^{\ell-1} H^{(\ell-1)}_k(t/\delta)\nonumber \\
	&= (1/\delta)^{(\ell)} H^{(\ell-1)}_k(t/\delta).\label{eq:h-deriv}
\end{align}
Then, using the definition of $f_{\delta,k}^O$ and the fact that for any $k_1 + k_2 = k$, $(f \star g)^{(k)} = f^{(k_1)} \star f^{(k_2)}$, we can write, for any $(x, y)$ pair,
\begin{align}
	(f_{\delta,k}^O)^{(k-1)}(x) - (f_{\delta,k}^O)^{(k-1)}(y) &= (f \star H_{k,\delta})^{(k-1)}(x) - (f \star H_{k,\delta})^{(k-1)}(y) \nonumber\\
	&= \left\{f^{(\ell-1)} \star H_{k,\delta}^{(k-\ell)}\right\}(x) - \left\{f^{(\ell-1)} \star H_{k,\delta}^{(k-\ell)}\right\}(y) \nonumber\\
	&= \int_{-\delta}^{\delta} \left\{f^{(\ell-1)}(x-t) - f^{(\ell-1)}(y-t)\right\}H_{k,\delta}^{(k-\ell)}(t) dt \nonumber\\
	\text{(applying \eqref{eq:h-deriv})} &= \left(1/\delta^{k-\ell-1}\right)\int_{-\delta}^{\delta} \left[f^{(\ell-1)}(x-t) - f^{(\ell-1)}(y-t)\right]H^{(k-\ell)}_{k}(t/\delta) dt \nonumber\\
	\text{(letting $s = t/\delta$)} &= \left(1/\delta^{k-\ell}\right)\int_{-1}^{1} \left[f^{(\ell-1)}(x-\delta s) - f^{(\ell-1)}(y-\delta s)\right]H^{(k-\ell)}_{k}(s) ds\label{eq:conv-deriv}
\end{align}
Taking the absolute value of \eqref{eq:conv-deriv}, we get
\begin{align*}
	\left|(f_{\delta,k}^O)^{(k-1)}(x) - (f_{\delta,k}^O)^{(k-1)}(y)\right| 
	&= \left(1/\delta^{k-\ell}\right) \left|\int_{-1}^{1} \left\{f^{(\ell-1)}(x-\delta s) - f^{(\ell-1)}(y-\delta s)\right\}H^{(k-\ell)}_{k}(s) ds\right| \\
	&\leq \left(1/\delta^{k-\ell}\right) \int_{-1}^{1} \left|f^{(\ell-1)}(x-\delta s) - f^{(\ell-1)}(y-\delta s)\right|\left|H^{(k-\ell)}_{k}(s)\right| ds \\
	&\leq \left(1/\delta^{k-\ell}\right) C_{k,k-\ell}\int_{-1}^{1} \left|f^{(\ell-1)}(x-\delta s) - f^{(\ell-1)}(y-\delta s)\right| ds
\end{align*}
where $C_{k,k-\ell}$ is the constant bounding the derivative of $H_k$ from Proposition \ref{kernels}. Now using the definition in \eqref{total-variation}, and setting $x = x_{m+1}$ and $y = x_m$, we have
\begin{align*}
	P_k(f_{\delta,k}^O) &\equiv \sup_{x_1, ..., x_M} \sum_{m=1}^{M} \left|(f_{\delta,k}^O)^{(k-1)}(x_{m+1}) - (f_{\delta,k}^O)^{(k-1)}(x_{m})\right| \\
	&\leq \sup_{x_1, ..., x_M} \sum_{m=1}^{M} \left(1/\delta^{k-\ell}\right) C_{k,k-\ell}\int_{-1}^{1} \left|f^{(\ell-1)}(x_{m+1}-\delta s) - f^{(\ell-1)}(x_{m}-\delta s)\right| ds. \\
	&\leq \left[(1/\delta)^{(k-\ell)}C_{k,k-\ell} \right]\sup_{x_1, ..., x_M} \sum_{m=1}^{M} \int_{-1}^{1} \left|f^{(\ell-1)}(x_{m+1}-\delta s) - f^{(\ell-1)}(x_{m}-\delta s)\right| ds.
\end{align*}
We can now switch the integral and the finite summation $(M < \infty)$, and further upper bound the supremum of the integral by the integral of the supremum:
\begin{align*}
	P_k(f_{\delta,k}^O) \leq \left\{(1/\delta)^{(k-\ell)}C_{k,k-\ell} \right\}\int_{-1}^{1} \sup_{x_1, ..., x_M} \sum_{m=1}^{M} \left|(f^*)^{(\ell-1)}(x_{m+1}-\delta s) - (f^*)^{(\ell-1)}(x_{m}-\delta s)\right| ds.
\end{align*}
Finally, since the supremum considers all finite partitions, we can remove the $\delta s$ shift in the $x's$, to get an expression with the $\ell$th order total variation of $f^*$:
\begin{align*}
	P_k(f_{\delta,k}^O) &\leq \left\{(1/\delta)^{(k-\ell)}C_{k,k-\ell} \right\}\int_{-1}^{1} \sup_{x_1, ..., x_M} \sum_{m=1}^{M} \left|f^{(\ell-1)}(x_{m+1}) - f^{(\ell-1)}(x_{m})\right| dt \\
	&= \left\{(1/\delta)^{(k-\ell)}C_{k,k-\ell} \right\}\int_{-1}^{1} P_{\ell}(f) dt \\
	&= 2\left\{(1/\delta)^{(k-\ell)}C_{k,k-\ell} \right\}P_{\ell}(f).
\end{align*}
\end{proof}

\subsection{Proofs of Main Text Results}

In this section, we derive the approximation error and penalty function using our approximation function, which is the convolution of $f^*$ with a higher-order kernel. These are the key components that we can plug into the MSE upper bounding equation of Theorem 1 to obtain our final result on rates in Theorem \ref{main-thm}.

\begin{manuallemma}{1}[Upper Bound on Approximation Error]\label{approx-error}
	Let $f^* \in \mathcal{F}_{\ell}$, and $k > \ell$. Then
\begin{align*}
    ||f^* - f_{\delta,k}^O||_n^2 = O_p(\delta^{2\ell-1}).
\end{align*}
\end{manuallemma}
\begin{proof}
Our goal is to upper bound $||f^* - f_{\delta,k}^O||_n$. First, we define $f_{\epsilon}^{(\ell-1)}$ as the piecewise constant approximation to the $(\ell-1)$th derivative of $f^*$, since we know that $f^{*(\ell-1)} \in \mathcal{F}_{1}$, that exists by Lemma \ref{f-epsilon}. Furthermore, we can define $f^{(\ell-1)}_{\epsilon}(x) = \int_{-\infty}^{0} f^{*(\ell-1)}(t) dt$ for all $x \in [-1, 0)$ and $f^{(\ell-1)}_{\epsilon}(x) = 0$ for $x < -1$. This way the two functions have the same integral in the left tail. Since $f_{\epsilon}^{(\ell-1)}$ is a piecewise constant, $f_{\epsilon}$ is an $(\ell-1)$th order piecewise polynomial with continuous derivatives up to order $\ell - 2$. We use the following expansion: 
\begin{align}
	||f^* - f_{\delta,k}^O||_n &\leq ||f^* - f_{\epsilon}||_n + ||f_{\epsilon} - f_{\epsilon} \star H_{\delta,k}||_n + ||f_{\epsilon} \star H_{\delta,k} - f^* \star H_{\delta,k}||_n\nonumber \\
	&\leq ||f^* - f_{\epsilon}||_{\infty} + ||f_{\epsilon} - f_{\epsilon} \star H_{\delta,k}||_n + ||(f_{\epsilon} - f^*) \star H_{\delta,k}||_{\infty}\nonumber \\
	\text{(Young's Convolution Inequality)} &\leq ||f^* - f_{\epsilon}||_{\infty} + ||f_{\epsilon} - f_{\epsilon} \star H_{\delta,k}||_n + ||f_{\epsilon} - f^*||_{\infty} ||H_{\delta,k}||_{L^1}\nonumber \\
	\text{(properties of $H_{\delta,k}$ from Proposition \ref{kernels})}&\leq ||f^* - f_{\epsilon}||_{\infty} + ||f_{\epsilon} - f_{\epsilon} \star H_{\delta,k}||_n + ||f_{\epsilon} - f^*||_{\infty} C_{k,0} \nonumber \\
	&\leq 2C'||f^* - f_{\epsilon}||_{\infty} + ||f_{\epsilon} - f_{\epsilon} \star H_{\delta,k}||_n\nonumber.
\end{align}
where $L^1(H_{\delta,k}) = \int |H_{\delta,k}(x)|dx = \int |H_k(x)| dx \leq C_{k,0}$ since $H_{k}$ is bounded and has bounded support on $[-1, 1]$, and $C' = \max(1, C_{k,0})$. Therefore, we would like to upper bound both $||f^* - f_{\epsilon}||_{\infty}$ and $||f_{\epsilon} - f_{\epsilon} \star H_{\delta,k}||_n$. 

First we focus on $||f^* - f_{\epsilon}||_{\infty}$. By Lemma \ref{f-epsilon}, $||f^{*(\ell-1)} - f_{\epsilon}^{(\ell-1)}||_{\infty} \leq C'' \epsilon P_{1}(f^{*(\ell-1)})$ for some $C''$. Therefore,
	\begin{align*}
		||f^* - f_{\epsilon}||_{\infty} = \left|\left|\int_{-\infty}^{x} \dots \int_{-\infty}^{t_2} [f^{*(\ell-1)}(t_1) - f^{(\ell-1)}_{\epsilon}(t_1)] dt_1 \dots dt_{\ell-1}\right|\right|_{\infty}.
	\end{align*}
	Since we have defined the functions $f_{\epsilon}^{(\ell-1)}$ and $f^{*(\ell-1)}$ to have equivalent integrals in the left tails, this simplifies to
	\begin{align*}
		||f^* - f_{\epsilon}||_{\infty} &= \left|\left|\int_{0}^{x} \dots \int_{0}^{t_2} [f^{*(\ell-1)}(t_1) - f^{(\ell-1)}_{\epsilon}(t_1)] dt_1 \dots dt_{\ell-1}\right|\right|_{\infty} \\
		&\leq \int_0^{x} \dots \int_0^{t_2} \left|\left|f^{*(\ell-1)} - f^{(\ell-1)}_{\epsilon}\right|\right|_{\infty} dt_1 \dots dt_{\ell-1} \\
		&\leq \left|\left|f^{*(\ell-1)} - f^{(\ell-1)}_{\epsilon}\right|\right| \int_0^1 \dots \int_0^1 dt_1 \dots dt_{\ell-1} \\
		&\leq C'' \epsilon P_1(f^{*(\ell-1)}) \\
		&= C''\epsilon P_{\ell}(f^*).
	\end{align*}

Now we turn to $||f_{\epsilon} - f_{\epsilon} \star H_{\delta,k}||_n$. Using the representation of a piecewise polynomial in Lemma \ref{f-epsilon-rep} (and with $f_{\epsilon}$ an $(\ell - 1)$th order piecewise polynomial), Lemma \ref{poly-conv} (using $\ell - 1$ rather than $\ell$), and the fact that convolution is a linear operator,
\begin{align*}
	(f_{\epsilon} \star H_{\delta,k}) = f_0(x) + \sum_{j=1}^{J(\epsilon)} \beta_{j} \big[\big\{(\cdot - d_j)^{\ell-1} 1(\cdot  \geq d_j) \big\}\star H_{\delta, k}\big](x).
\end{align*}
Therefore, we can express the empirical 2-norm difference between $f_{\epsilon} \star H_{\delta,k}$ and $f_{\epsilon}$ as
\begin{align}
	||f_{\epsilon} - (f_{\epsilon} \star H_{\delta,k})||_n &= \left\|\sum_{j=1}^{J(\epsilon)} \beta_{j} \Big[\big\{(x - d_j)^{\ell-1} 1(x  \geq d_j) \big\} - \big\{(x - d_j)^{\ell-1} 1(x  \geq d_j) \big\}\star H_{\delta, k}\Big]\right\|_n \nonumber \\
	&\leq \sum_{j=1}^{J(\epsilon)} |\beta_j| \left\|\big\{(x - d_j)^{\ell-1} 1(x  \geq d_j) \big\} - \big\{(x - d_j)^{\ell-1} 1(x  \geq d_j) \big\}\star H_{\delta, k} \right\|_n.\label{2norm}
\end{align}
Since $\sum_{j=1}^{J(\epsilon)} |\beta_j| = P_{\ell}(f_{\epsilon})/(\ell-1)!$ and $P_{\ell}(f_{\epsilon}) \leq P_{\ell}(f^*)$ by Lemma \ref{lemma:penalty-approx}, to get a rate in $\delta$ on $||f_{\epsilon} - (f_{\epsilon} \star H_{\delta,k})||_n$, we can focus on bounding the remaining $||\cdot||_n$  term above by some function of $\delta$.

Define a new variable $y = x - d_j$, for simplicity of notation. Then we have that the difference between the two functions is given by, letting $g(y) = y^{\ell-1} 1(y \geq 0)$,
\begin{align*}
	g(y) - (g \star H_{\delta,k})(y) &\equiv y^{\ell-1} 1(y \geq 0) - \int_{-\delta}^{\delta} (y-t)^{\ell-1} 1(y \geq t) H_{\delta,k}(t) dt \\
	&= \begin{cases}
		0 \mbox{ if } y < -\delta \\
		- \int_{-\delta}^{\delta} (y-t)^{\ell-1} 1(y \geq t) H_{\delta,k}(t) dt \mbox{ if } -\delta \leq y < 0 \\
		y^{\ell-1} - \int_{-\delta}^{\delta} (y-t)^{\ell-1} 1(y \geq t) H_{\delta,k}(t) dt \mbox{ if } 0 \leq y \leq \delta \\
		0 \mbox{ if } y > +\delta
	\end{cases}
\end{align*}
Therefore, we only have a difference in function values when $|y| \leq \delta$. We focus on each case separately. First, we have, when $0 > y \geq -\delta$:
\begin{align*}
	|g(y) - (g \star H_{\delta,k})(y)| &= \left|\int_{-\delta}^{\delta} (y - t)^{\ell-1} 1(y \geq t) H_{\delta, k}(t) dt\right| \\
	&\leq \int_{-\delta}^{\delta} |(y - t)|^{\ell-1} H_{\delta,k}(t) dt.
\end{align*}
Similarly, when $0 \leq y \leq \delta$:
\begin{align*}
	|g(y) - (g \star H_{\delta,k})(y)| &= \left|y^{\ell-1} - \int_{-\delta}^{\delta} (y-t)^{\ell-1} 1(y \geq t) H_{\delta,k}(t) dt\right| \\
	\text{(Lemma \ref{poly-conv})} &= \left|\int_{-\delta}^{\delta} (y-t)^{\ell-1} \left[1-1(y \geq t)\right] H_{\delta,k}(t) dt\right| \\
	&\leq \int_{-\delta}^{\delta}|y-t|^{\ell-1} H_{\delta,k}(t) dt.
\end{align*}
For each of these cases (i.e., $|y| \leq \delta$), we have
\begin{align*}
	\sup_{|y|\leq \delta}|g(y) - (g \star H_{\delta,k})(y)| &\leq \sup_{|y|\leq \delta} \int_{-\delta}^{\delta}|y-t|^{\ell-1} H_{\delta,k}(t) dt \\
	\text{(letting $s = t/\delta$)} &= \sup_{|y|\leq \delta}\int_{-1}^{1} |y - s\delta|^{\ell-1} H_k(s) ds \\
	&\leq 2^{\ell-1}\delta^{\ell-1} \int_{-1}^{1} H_k(s) ds = 2^{\ell-1}\delta^{\ell-1}.
\end{align*}
Therefore, for knot $d_j$,
\begin{align*}
	||g(y_i) - (g \star H_{\delta,k})(y_i)||^2_n &= \frac{1}{n} \sum_{i=1}^{n} [g(y_i) - (g \star H_{\delta,k})(y_i)]^2 \\
	&\leq \frac{1}{n} \sum_{i=1}^{n} 1(|y_i| \leq \delta) 2^{2(\ell-1)}\delta^{2(\ell-1)} \\
	&\lessapprox P(|x - d_j| \leq \delta) 2^{2(\ell-1)}\delta^{2(\ell-1)} \\
	&= 2^{2(\ell-1)}\delta^{2(\ell-1) + 1} = 2^{2(\ell-1)}\delta^{2\ell-1}
\end{align*}
where the extra $\delta$ comes from the $x_i$ being equally spaced on the unit cube. Putting this together with \eqref{2norm}, we have that, for any $\epsilon > 0$, $||f^* - f^O_{\delta,k}||_n = O_p\{2C'C'' \epsilon P_{\ell}(f^*) + 2^{(\ell-1)}\delta^{(2\ell-1)/2}P_{\ell}(f^*)/(\ell-1)!\}$. Setting $\epsilon = \delta^{(2\ell-1)/2}$ completes the proof, with $||f^* - f^O_{\delta,k}||^2_n = O_p(\delta^{2\ell-1})$.
\end{proof}
\begin{manuallemma}{2}[Upper Bound on Penalty Function]\label{penalty}
	Let $f^* \in \mathcal{F}_{\ell}$, and $k > \ell$. Then $P_k(f_{\delta,k}^O) = O(1/\delta^{k-\ell})$. Specifically,
	\begin{align*}
		P_k(f_{\delta,k}^O) \leq 2 P_{\ell}(f^*)C_{k,k-\ell} /  \delta^{k-\ell}
	\end{align*}
	where $C_{k,k-\ell}$ is the constant defined in Proposition \ref{kernels}.
\end{manuallemma}
\begin{proof}
Since $f^{O}_{k,\delta}$ is defined as the convolution of $f^* \in \mathcal{F}_{\ell}$ and $H_{\delta,k}$, then we can directly apply Lemma \ref{lemma:tv-convolution} to say that $P_k(f_{\delta,k}^O) = O\{1/\delta^{(k-\ell)}\}$.
\end{proof}

\begin{manualtheorem}{2}[Improved Convergence Rates]\label{main-thm}
	Let $f^* \in \mathcal{F}_{\ell}$. If we choose $$\lambda_n = n^{-2k(k + \ell - 1)/(4k\ell - 1)} \{CP_{\ell}(f^*)\}^{(k^{-1} - 2)/(k^{-1} + 2)}$$ with $C$ from Lemma 2, and a $k$th order total variation penalty is used to estimate $\hat{f}$ in (3), then $\delta(n) = O\left\{n^{-2k/(4k\ell - 1)}\right\}$ balances the two terms in \eqref{inequality}, resulting in
	\begin{align*}
		||f^* - \hat{f}||_n^2 = O_p\left\{n^{\frac{-2k (2\ell - 1)}{4k\ell - 1}} \right\}.
	\end{align*}
\end{manualtheorem}
\begin{proof}
	Let $f_{\delta,k}^O$ be defined as in Lemmas \ref{approx-error} and \ref{penalty}. By Lemma \ref{penalty}, for each $\delta$, $f_{\delta,k}^O  \in \mathcal{F}_{k}$, since it has a finite $k$th order total variation. Then applying Theorem 1 and the results of Lemmas \ref{approx-error} and \ref{penalty}, we get that
	\begin{align}\label{eq:theorem2}
		||f^* - \hat{f}||_n^2 \leq O_p(\delta^{2\ell-1}) + O_p\{\delta^{-2(k-\ell)/(2k+1)}n^{-2k/(2k+1)}\}.
	\end{align}
	Optimizing over $\delta$ as a function of $n$, we find that $\delta = n^{-2k/(4lk - 1)}$ balances the approximation error and penalty term. Specifically, we set $\delta = n^{c}$ and solve for $c$ to get equality of the exponents:
    \begin{align*}
        \c(2\ell-1) &= -\frac{c 2(k-\ell)}{(2k+1)}-\frac{2k}{2k+1} \\
        c(2\ell-1)(2k+1) + c 2(k-\ell) &= -2k \\
        c &= \frac{-2k}{(2\ell-1)(2k+1) +  2(k-\ell)} \\
        c &= \frac{-2k}{4\ell k - 1}.
    \end{align*} 
Thus, $\delta = n^{c}$ with $c$ given above balances the two sources of error. Plugging this $\delta$ into \eqref{eq:theorem2}, we get the desired $O_p\left\{n^{-2k(2\ell-1)/(4k\ell - 1)} \right\}$ rate.
\end{proof}

\section{Simulation Results from \citet{simonConvergenceRatesNonparametric2021}}\label{app:sim}

To determine whether their theoretical convergence rates were sharp, \citet{simonConvergenceRatesNonparametric2021} performed simulations of estimating $f^*$ with misspecified penalties, using the data generating process described in Section 1 of the main text. Specifics for estimating $\hat{f}$ are available in \citet{simonConvergenceRatesNonparametric2021}. The experiments had varying sample sizes $n$, which allowed them to determine MSE as a function of $n$, providing estimates of the convergence rates. In Table \ref{tab-1}, we include their results, adding two additional columns: their theoretical rate and our theoretical rate, for each scenario. These are obtained by plugging in values of $k$ and $\ell$ for the specific $f^*$ and penalty into the theoretical rates given in Table 1 of our main text. We see that our rates match their rates, which are very close to the empirical rates, when $f^* \in \mathcal{F}_1$. For $\mathcal{F}_2$, we see our theoretical rate (-0.783) is much closer to the empirical rate (-0.780) than their theoretical rate (-0.750).

\begin{table}[htbp]
\centering
\caption{Comparison of empirical results with the theoretical convergence rates derived in this paper, and in \citet{simonConvergenceRatesNonparametric2021}. $f^*$ is the true data-generating function.}\label{tab-1}
\begin{tabular}{c|ccc}
\hline
\multirow{2}{*}{Penalty} & \multirow{2}{*}{Empirical Rate (SE)} & Theoretical Rate in & \multirow{2}{*}{Our Theoretical Rate} \\
& & \citet{simonConvergenceRatesNonparametric2021} & \\
\hline
\hline
    & \multicolumn{3}{c}{$f^*(x) = 3 I(x > 0.5) \in \mathcal{F}_{1}$} \\
  \hline
  \hline
  $P_2$ & -0.572 (0.006) & -0.571 & -0.571 \\
  $P_3$ & -0.548 (0.005) & -0.546 & -0.546 \\
  \hline
  \hline
  & \multicolumn{3}{c}{$f^*(x) = 3 (x - 0.5)_+ \in \mathcal{F}_2$} \\
  \hline
  \hline
  $P_3$ & -0.780 (0.018) & -0.750 & -0.783 \\
  \hline
\end{tabular}
\end{table}

\newpage
\bibliographystyle{apalike}
\bibliography{bib}